\documentclass[11pt]{article}
\usepackage[scale=0.8]{geometry}
\usepackage[english]{babel}
\usepackage{amssymb,amsmath,amsthm,bm}
\usepackage{amsfonts}
\usepackage{latexsym}
\usepackage{lineno}
\usepackage{enumitem}
	\setlist[enumerate]{noitemsep,nolistsep}
\usepackage{mathtools}
\usepackage[
pdfauthor={},
pdftitle={},
pdfstartview=XYZ,
bookmarks=true,
colorlinks=true,
linkcolor=blue,
urlcolor=blue,
citecolor=blue,
bookmarks=true,
linktocpage=true,
hyperindex=true
]{hyperref}

\newenvironment{proofclaim}[1][Proof of claim]{\begin{proof}[#1]}{\end{proof}}

\theoremstyle{plain}
\newtheorem{thm}{Theorem}[section]

\newtheorem{lem}[thm]{Lemma}

\newtheorem{conj}[thm]{Conjecture}
\newtheorem{claim}[thm]{Claim}
\newtheorem{fact}[thm]{Fact}
\numberwithin{equation}{section}

\begin{document}
	\title{Equitable tree colouring of graphs}
	
	\author{Yuping Gao\footnote{School of Mathematics and Statistics, Lanzhou University, Lanzhou 730000, China.
			Corresponding author.
			Email: {\tt gaoyp@lzu.edu.cn}.
			Supported in part by  NSFC grant, No. 12271228 and China Scholarship Council, No. 202406180027.}
		\qquad
		Allan Lo\footnote{School of Mathematics, University of Birmingham, Birmingham, B15 2TT, UK. Email: {\tt s.a.lo@bham.ac.uk}.}
		\qquad
		Songling Shan\footnote{Department of Mathematics and Statistics, Auburn University, Auburn, AL 36849, USA.
			Email:	{\tt szs0398@auburn.edu}.
			Supported in part by NSF grant   DMS-2451895.
		}
	}
	
	\date{\today}
	\maketitle
	\begin{abstract}
		Let $k \in \mathbb{N}$ and let $G$ be a simple graph with maximum degree~$\Delta$.
		A $k$-colouring $\varphi$ of $G$ is an assignment of colours from $\{1,2,\ldots,k\}$ to the vertices of $G$.
		We call $\varphi$ \emph{proper} if adjacent vertices receive distinct colours, and  \emph{equitable} if the sizes of any two colour classes differ by at most one.
		The celebrated Hajnal--Szemer\'{e}di  theorem states that a proper equitable $k$-colouring exists whenever $k \ge \Delta + 1$.

		In this paper, we study its tree colouring variant  in which each colour class induces a forest.
		This is closely related to the vertex arboricity which was introduced by Chartrand, Kronk, and Wall.
		More precisely, we prove that if $n \ge 3\Delta^4$ and $k \ge (\Delta+2)/2$, then every $n$-vertex graph with maximum degree at most $\Delta$ contains an equitable tree $k$-colouring.
		This confirms a conjecture of Wu, Zhang, and Li when $\Delta$ is even and up to an additive constant of~$1$ otherwise for large $n$.
		We also consider $d$-degenerate colouring in which each colour class induces a $d$-degenerate graph.
		
\medskip
		
			\noindent {\textbf{Keywords}: Equitable colouring; tree colouring; vertex arboricity; $d$-degenerate graph}
	\end{abstract}

\section{Introduction}\label{sec:introduction}

For integers $p$ and $q$, let  $[p,q] = \{i \in \mathbb{Z} \colon p \le i \le q \}$. In particular, we denote $[1,q]$ by $[q]$. Let $k \in \mathbb{N}$ and let $G$ be a graph. A \emph{$k$-colouring} $\varphi: V(G) \rightarrow [k]$ is an assignment of colours to the vertices of $G$. The set of vertices assigned the same colour under $\varphi$ is a \emph{colour class}.
An old conjecture of Erd\H{o}s~\cite{E1964} states that any graph $G$ has a proper equitable $k$-colouring if $k\ge \Delta(G)+1$. Here, a \emph{proper equitable $k$-colouring} of $G$ is a $k$-colouring of $G$ such that no two adjacent vertices receive the same colour and the sizes of any two distinct colour classes differ by at most one.
This conjecture was proved by Hajnal and Szemer\'{e}di~\cite{HS1970}, and a simpler proof was later provided by Kierstead and Kostochka~\cite{KK2008}.
The \emph{equitable chromatic number} of a graph $G$, denoted by $\chi_{=}(G)$, is the smallest integer $k$ such that $G$ admits a proper equitable $k$-colouring. Note that a graph $G$ which is equitably $k$-colourable may not be equitably $k'$-colourable for some $k'>k$. For example, $\chi_{=}(K_{3,3})=2$, but $K_{3,3}$ has no proper equitable $3$-colouring. The smallest integer $k$ such that $G$ has a proper equitable $k'$-colouring for every number of colours $k'\ge k$ is called the \emph{equitable chromatic threshold} of $G$, and is denoted by $\chi_{\equiv}(G)$.
The following two conjectures regarding  $\chi_{=}(G)$ and $\chi_{\equiv}(G)$ are still open.

\begin{conj}[Meyer~\cite{M1973}]\label{ECC} For any connected graph $G$, $\chi_{=}(G)\leq \Delta (G)$, with the exception that $G$ is a complete graph or an odd cycle.
\end{conj}

\begin{conj}[Chen, Lih, and Wu~\cite{CLW1994}]\label{EDeltaCC} For any connected graph $G$, $\chi_{\equiv}(G)\leq \Delta(G)$, with the exception that $G$ is a complete graph, an odd cycle, or a
	complete bipartite graph $K_{2m+1,2m+1}$.
\end{conj}

A proper $k$-colouring of $G$ can be viewed as a partition of $V(G)$ into $k$ independent sets.
Lov\'asz~\cite{L1966} considered the corresponding problem where each colour class induces a subgraph of maximum degree at most $d$. Whilst the equitable generalization was first investigated by Fan, Kierstead, Liu, Molla, Wu, and Zhang~\cite{FKLMWZ2011}, where each colour class induces a graph of maximum degree one.

In this paper, we are interested in vertex colouring where each colour class induces a forest. A $k$-colouring of a graph $G$ is a \emph{tree $k$-colouring} if the subgraph of $G$ induced on each colour class is a forest. The minimum such $k$ is called the (\emph{vertex}) \emph{arboricity} of $G$, denoted by $va(G)$. This concept was proposed by Chartrand, Kronk, and Wall~\cite{CKW1968}, who proved that  $va(G)\leq \Delta(G)/2+1$ for any graph $G$ and $va(G)\leq 3$ for any planar graph $G$.
Equitable tree colouring was first studied by Wu, Zhang, and Li~\cite{WZL2013}, who showed (among other results) that every planar graph with girth at least 5 has an equitable tree $k$-colouring for all $k\geq 3$. They further conjectured that the girth condition can be removed but with constant $k$. This conjecture was confirmed by Esperet,  Lemoine, and Maffray~\cite{ELM2015} with $k=4$. For general graphs, Wu, Zhang, and Li~\cite{WZL2013} conjectured the following.

	\begin{conj}[Wu, Zhang, and Li~\cite{WZL2013}]\label{EVAC}
Let $\Delta, k\in \mathbb{N}$ with $k \ge (\Delta+1)/{2}$.
Then every graph with maximum degree at most $\Delta$ has an equitable tree $k$-colouring.
	\end{conj}

	Conjecture~\ref{EVAC} has been verified for several classes of graphs. It holds for graphs with $\Delta(G) \ge |V(G)|/{2}$ by Zhang and Wu~\cite{ZW2014}, for $d$-degenerate graphs $G$ with $\Delta(G) \ge 9.818d$ by Zhang, Niu, Li, and Li~\cite{ZNLL2021}, for all 5-degenerate graphs by Chen, Gao, Shan, Wang, and Wu~\cite{CGSWW2016}, and for graphs $G$ with $\Delta(G)\leq6$ by Chen, Liu, and Zhang~\cite{CLZ2025}.
In this paper, we confirm Conjecture~\ref{EVAC} for large graphs if $\Delta(G)$ is even and up to an additive constant of~$1$ if $\Delta(G)$ is odd.

\begin{thm} \label{thm:treecolouring}
Let $\Delta, k,n \in \mathbb{N}$ with $k \ge (\Delta+2)/{2}$ and $n  \ge3 \Delta^4$.
Then every $n$-vertex graph with maximum degree at most $\Delta$ has an equitable tree $k$-colouring.
\end{thm}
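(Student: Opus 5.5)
We follow the Kierstead--Kostochka proof of the Hajnal--Szemerédi theorem, replacing ``independent'' by ``induces a forest'' and exploiting the slack that $k \ge (\Delta+2)/2$ gives.

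The basic observation is local. If $k \ge (\Delta+2)/2$ and $v$ is any vertex, then $v$ has at most $\Delta \le 2k-2$ neighbours. Hence some colour class $V_i$ contains at most one neighbour of $v$, and in fact at least two classes do unless all but one class contain exactly two neighbours. Adding $v$ to a class in which it has at most one neighbour keeps that class a forest.

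We argue by induction on the number of edges, in the Kierstead--Kostochka style. Delete an edge $xy$ and take an equitable tree $k$-colouring of $G-xy$. If $x,y$ lie in different classes, or in the same class but in different components of that forest, we are done. Otherwise adding $xy$ creates a cycle in some class $V_1$. We move $x$ to another class $W$ in which $x$ has at most one neighbour; this gives a nearly equitable tree colouring with one class $V^-$ of size $\lfloor n/k\rfloor$-ish minus one and one class $V^+$ plus one.

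We then build the auxiliary digraph $\mathcal H$ on the classes: $X \to Y$ if some $v\in X$ has at most one neighbour in $Y$, so that $v$ can be moved into $Y$. The definition is that moving $v$ into $Y$ preserves acyclicity, which holds when $v$ has at most one neighbour in $Y$; more generally it holds when all its neighbours in $Y$ lie in distinct components of $G[Y]$. If $V^-$ is reachable from $V^+$, shifting vertices along the path yields an equitable colouring. Otherwise let $\mathcal A$ be the set of classes that can reach $V^-$ and $\mathcal B$ the rest, with $|\mathcal A|=a$ and $|\mathcal B|=b$. Every vertex in a class of $\mathcal B$ has at least two neighbours in each class of $\mathcal A$. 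Counting edges between $B=\bigcup\mathcal B$ and $A=\bigcup \mathcal A$ then gives
\[2a|B| \le e(A,B) \le \Delta |A|,\]
with $|A| \approx a n/k - 1$ and $|B|\approx bn/k+1$. Combined with $\Delta\le 2k-2$, this forces $b$ to be small relative to $a$, but it is not by itself a contradiction. So, as in Kierstead--Kostochka, we pass to ``terminal'' classes of $\mathcal A$ and solo vertices.

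Call $z\in A'$ (for $A'$ a class of $\mathcal A$) \emph{solo} for $y\in B$ if $z$ is the unique vertex whose presence blocks $y$, for instance one of exactly two neighbours lying in one tree of $A'$. We perform swaps: move $y$ into $A'$ and $z$ into some other class. This raises $\mathcal A$ or reduces a potential function, and induction on $|\mathcal A|$ finishes.

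The main obstacle is this swapping or discharging step. In the proper-colouring case, ``$y$ has exactly one neighbour in $X$'' gives a clean solo structure. In the forest setting the obstruction is instead ``two neighbours in the same tree component,'' and a swap may create cycles elsewhere. This is where we plan to spend the extra $+1$ in $k$ together with $n\ge 3\Delta^4$.

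Since $n$ is large, every class has size $\ge n/k \ge 3\Delta^3$. Only $O(\Delta^2)$ vertices lie within distance $2$ of any fixed pair $y,z$, so we can choose swaps of vertices that are far apart and whose tree components are unaffected. We also want a large class to contain many vertices with at most one neighbour in a given target class. For this we will count edges: the $2k-2\ge\Delta$ slack means a vertex has degree $\ge 2$ into every class of $\mathcal A$ only if it is nearly saturated. We expect a counting bound, of the kind $\sum$ degrees $\le \Delta|B|$, to show that some class in $\mathcal A$ contains $\Omega(n/k) > \Delta^2$ movable vertices. This gives enough disjoint swap candidates to complete the augmentation.
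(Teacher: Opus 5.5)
Your set-up matches the paper's: induction on $e(G)$, a near-equitable colouring obtained by moving one endpoint of the deleted edge, the digraph of movable vertices, the split into accessible classes $\mathcal{A}$ and the rest $\mathcal{B}$, and the fact that each vertex of $B$ has at least two neighbours in every class of $\mathcal{A}$. The problem is that everything that actually produces a contradiction is only announced (``we plan'', ``we expect''), so there is no proof yet. The one inequality you do derive is also weaker than you claim. With $|A|\approx an/k$ and $|B|\approx bn/k$, the bound $2a|B|\le e(A,B)\le\Delta|A|$ gives only $2b\lesssim\Delta\le 2k-2$, i.e.\ $a\ge 1$; it does not make $b$ small relative to $a$. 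Your ``solo'' swap is also incomplete. After moving $y\in B$ into a class of $\mathcal{A}$, you must recolour $G[B]-y$ equitably with $b$ colours. The paper does this by applying the induction hypothesis to $G[B]-y$, after strengthening the size assumption to $\lfloor n/k\rfloor\ge 3\Delta^3$ so that it is inherited, and using $\Delta(G[B])\le 2b-2$. You must also keep every class of $\mathcal{A}$ at the right size without creating cycles. You never specify a potential function or an ``induction on $|\mathcal{A}|$'' that would handle this.

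Three ingredients are missing.

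First, a minimality choice that produces one good class. The paper minimises $(|C_k|-|C_-|,\,b,\,\sum_{V\in\mathcal{T}}|V^*|)$, where $V^*$ is the set of non-isolated vertices of $G[V]$ and $\mathcal{T}$ is the set of classes cut off from the terminal $C_-$ by removing a single class $U_-$. This yields a class $W\in\mathcal{T}$ whose non-isolated vertices each have a neighbour inside $W$, a neighbour in every class of $\mathcal{T}$, and two neighbours in every other class of $\mathcal{A}$. The in-class neighbour is what turns your useless global count into a sharp local one: if no vertex of $W^*$ were movable, you would get $2b|W|<2|B|\le e(B,W^*)\le(2b-1)|W^*|$.

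Second, the bound $t=|\mathcal{T}|<2b$. This comes from a clique property (the $B$-neighbours of $v\in V\in\mathcal{T}$ that have exactly two neighbours in $V$ are pairwise adjacent) combined with a weighting argument.

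Third, control of $B$-vertices with only two or three neighbours in $W^*$. If $x$ has two, both neighbours are non-movable. If it has three, one moves two of them out of $W$ along disjoint segments of a path to $C_-$. One then refills $W$ with a vertex $u$ that has at most one neighbour in $W^*$ and lies at distance greater than $2$ from $x$, taken from a class $V$ with $e(V,W^*)<2(|V|-\Delta^2)$. Classes not of this type send many edges to $W^*$ and improve the upper bound on $e(B,W^*)$. It is this dichotomy that uses $n\ge 3\Delta^4$, not a global counting bound guaranteeing many movable vertices in some class. The step you call ``the main obstacle'' is precisely the proof, and it is absent.
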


Furthermore, we consider the following generalization of tree colourings. A graph is called $d$-\emph{degenerate} if every subgraph contains a vertex of degree at most $d$. For $d\ge 0$, a $d$-\emph{degenerate} $k$-\emph{colouring} of a graph $G$ is a $k$-colouring such that the subgraph induced on  each colour class is $d$-degenerate.  Note that a $0$-degenerate colouring is a proper colouring, and a 1-degenerate colouring is a tree colouring. Kim, Oum, and Zhang~\cite{KOZ2021} showed that every planar graph has an equitable $d$-degenerate $k$-colouring for any $d=2$, $k=3$ or $d\geq3$, $k=2$. Zhang and Zhang~\cite{ZZ2021} proposed the corresponding statement for tree colouring conjecture.

	\begin{conj}[Zhang and Zhang~\cite{ZZ2021}]\label{conj:EDC}
   Let $\Delta, d, k\in \mathbb{Z}$ with $d \ge 0$ and $k \ge (\Delta+1)/(d+1)$.
    Then every graph with maximum degree at most $\Delta$ has an equitable $d$-degenerate $k$-colouring.
	\end{conj}

Note that the bound in Conjecture~\ref{conj:EDC} is best possible by considering (disjoint union of) $K_{\Delta+1}$. Hajnal and Szemer\'{e}di~\cite{HS1970} resolved the case when $d=0$. Zhang and Zhang~\cite{ZZ2021} confirmed the conjecture for $(d+1)$-degenerate graphs, graphs with bounded maximum average degree, and planar graphs with large maximum degree. We have the following result to support the conjecture.
	
\begin{thm}\label{thm:degeneratecolouring}
    Let $\Delta, d, k\in \mathbb{N}$ with $k \ge \Delta/d+1$.
    Then every graph with maximum degree at most $\Delta$ has an equitable $d$-degenerate $k$-colouring.
	\end{thm}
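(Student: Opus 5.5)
The plan is to prove something stronger: there is an equitable $k$-colouring in which every colour class induces a subgraph of maximum degree at most $d$. Such a subgraph is $d$-degenerate, since every subgraph of it has a vertex of degree at most~$d$. The tool is the Kierstead--Kostochka recolouring proof of the Hajnal--Szemer\'edi theorem~\cite{KK2008}, with "$v$ has no neighbour in $V_j$" replaced throughout by "$v$ has at most $d-1$ neighbours in $V_j$". Call such a $v$ \emph{movable} to $V_j$: moving $v$ into $V_j$ keeps the maximum degree in $V_j$ at most $d$ for $v$. Its neighbours in $V_j$ also stay at degree at most $d$, provided they had degree at most $d-1$ there, which I will maintain by a suitable choice of potential.

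The basic counting is as follows. If $k\ge \Delta/d+1$, then $(k-1)d\ge \Delta$. So a vertex $v$ with $\deg(v)\le\Delta$ has at most $d-1$ neighbours in at least one class other than its own, unless it has exactly $d$ neighbours in each of the other $k-1$ classes (and then none in its own). More generally, if $v$ has $s$ neighbours in its own class, then it has at most $d-1$ neighbours in at least $s+1$ other classes. This pigeonhole slack plays the role of "$\Delta+1$ colours" in~\cite{KK2008}.

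I will induct on $|E(G)|$, as Kierstead and Kostochka do. Remove an edge $xy$ and take an equitable colouring $V_1,\dots,V_k$ of $G-xy$ with all class-induced degrees at most $d$. If adding $xy$ creates no violation, we are done. Otherwise $x$, say, now has $d+1$ neighbours in its class $V_1$. Move $x$ to a class in which it has at most $d-1$ neighbours; by the counting there is one. This gives a nearly equitable colouring with one class $V^-$ too small and one class $V^+$ too large. As in~\cite{KK2008}, build the auxiliary digraph on the classes with an arc $V_i\to V_j$ whenever some vertex of $V_i$ is movable to $V_j$. If $V^+$ reaches $V^-$, shifting one vertex along a path restores equity. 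Otherwise, let $\mathcal{A}$ be the set of classes that can reach $V^-$ and $\mathcal{B}$ the rest, and analyse terminal classes in $\mathcal{A}$, solo vertices and solo edges.

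The main obstacle is the counting in the Kierstead--Kostochka discharging step. There one bounds the edges between $B=\bigcup\mathcal{B}$ and $A=\bigcup\mathcal{A}$ from below: each vertex of $B$ is non-movable to each class of $\mathcal{A}$, hence has at least $d$ neighbours there rather than $1$. One then compares this with the upper bound coming from $\deg\le\Delta$ on the $A$-side, using $a=|\mathcal{A}|$ and $b=|\mathcal{B}|$. The inequality $(k-1)d\ge\Delta$ is exactly what replaces $k\ge\Delta+1$: every non-movability count is multiplied by~$d$, and the slack survives. I would first try to transcribe their potential argument with all "independent" conditions weakened to "degree $\le d$". The delicate point is checking that the two-vertex swaps (moving a solo vertex $w$ out and some $z$ in) do not push another vertex above degree $d$. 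To handle this I would choose, among valid colourings, one that lexicographically minimises the number of vertices attaining class-degree exactly $d$. Should that fail, a fallback is to target the weaker condition of $d$-degeneracy directly, fixing a degeneracy order and counting only back-neighbours, which gives more room in the swaps.
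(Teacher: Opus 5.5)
\textbf{There is a genuine gap, and it is at the step you yourself flag as delicate.} The approach breaks once you switch to classes of maximum degree at most $d$.

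Your notion of \emph{movable} (at most $d-1$ neighbours in $V_j$) does not make a move legal. Suppose $u\in V_j$ already has $d$ neighbours in $V_j$ and $v\sim u$. Then adding $v$ raises $u$ to degree $d+1$, even when $u$ is the only neighbour of $v$ in $V_j$. This already happens in your first step, when $x$ is moved out of $V_1$.

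Suppose you repair the definition by forbidding saturated neighbours. Then a single edge can block a move, so ``non-movable $\Rightarrow$ at least $d$ neighbours in that class'' is false. The factor-$d$ gain in the Kierstead--Kostochka count therefore disappears, and that gain is the only place where $(k-1)d\ge\Delta$ is meant to enter. The solo-edge swaps fail for the same reason: whether $y$ can replace $z$ in a class depends on the class-degrees of $y$'s other neighbours, not just on $|N(y)\cap V|$.

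Lexicographically minimising the number of saturated vertices does not eliminate them, and you give no argument that the resulting obstructions can be swapped away. The pigeonhole step is also weaker than you state. A vertex with $s$ neighbours in its own class is only guaranteed $\lceil s/d\rceil$ other classes in which it has at most $d-1$ neighbours, not $s+1$.

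So the proposal replaces the theorem by a strictly stronger statement and leaves its key step open. For $d=2$, that statement already asks for an equitable partition into $\lceil\Delta/2\rceil+1$ sets, each inducing paths and cycles.

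The missing idea is the one your fallback only gestures at, and a fixed degeneracy order will not do, since classes change under moves. The paper works with $d$-degeneracy and the core $V^*$. This is the prefix, up to the last vertex with exactly $d$ back-neighbours, of a degeneracy order in which that prefix is minimal. Two properties make it work:
\begin{itemize}
\item Adding a vertex with at most $d$ neighbours in $U^*$ keeps $G[U\cup\{v\}]$ $d$-degenerate, whatever the rest of $U$ looks like (Fact~\ref{lemma:d-degenerate}(a)). So non-movability is witnessed locally by at least $d+1$ edges into $U^*$, which is \eqref{eqn:B-vertex-degree-in-A}.
\item Adding a vertex with at most $d-1$ neighbours leaves the core unchanged (Fact~\ref{lemma:d-degenerate}(b)).
\end{itemize}
The paper then chooses a near-equitable colouring that lexicographically minimises $(|C_k|-|C_-|,\,b,\,\sum_{V\in\mathcal{T}}|V^*|)$. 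Here $\mathcal{T}$ is the set of classes of the last block $\mathcal{D}_-$ that are cut off from $C_-$ by the terminal-cut $U_-$. Rotating core vertices around a cycle strictly shrinks cores, and this yields $W\in\mathcal{T}$ such that every $w\in W^*$ has at least $d$ neighbours in each class of $\mathcal{T}$. By the cut structure, each such $w$ is also non-movable to every class of $\mathcal{A}\setminus(\mathcal{T}\cup\{U_-\})$.

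Hence $\sum_{w\in W^*}d_G(w,A)\ge (a-1)d|W^*|$. Combined with $\Delta\le d(k-1)$ and $|B|\ge b|W|+1$, this gives $e_G(B,W^*)\le db|W^*|<d|B|$, contradicting $e_G(B,W^*)\ge (d+1)|B|$. The count is taken over edges between $B$ and a single core $W^*$ whose vertices have controlled degree into $A$. The comparison you sketch, of $e(A,B)\ge da|B|$ against $\Delta|A|$, gives no contradiction on its own.
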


\subsection{Structure of the paper}

The remainder of this paper is organized as follows. In Section~\ref{sec:property},  we investigate  some properties  of a ``near-equitable''  $d$-degenerate $k$-colouring of $G$, where the  largest colour class and the smallest colour class differ by two or three  in size, and the remaining colour classes have the appropriate sizes.  Based on these properties, we present the proofs of Theorem~\ref{thm:treecolouring} and Theorem~\ref{thm:degeneratecolouring} in Sections~\ref{sec:proof1} and~\ref{sec:proof2}, respectively.

\subsection{Notation}

Let $G$ be a simple graph. For a vertex subset $S \subseteq V(G)$, let $G[S]$ be the subgraph of $G$ induced on $S$ and $G-S=G[V(G)\setminus S]$. We write $G-v$ for   $G-\{v\}$.
Let $N_G(v,S) = N_G(v) \cap S$ and $d_G(v,S) = |N_G(v,S)|$ for $v\in V(G)$.
For $T\subseteq V(G)$, let $N_G(T,S) = \bigcup_{v\in T}N_G(v,S)$.
For two disjoint subsets $S, T \subseteq V(G)$, let $E_G(S,T)$ be the set of edges with one endvertex in $S$ and the other in $T$, and let
$e_G(S,T) = |E_G(S,T)|$.   We write $e_G(v,T)$ for   $e_G(\{v\}, T)$.
For vertices $u,v \in V(G)$, we write \emph{$u \sim v$ in $G$} if $u$ and $v$ are adjacent and  write \emph{$u \not\sim v$ in $G$} otherwise.

\section{Properties of a near-equitable  $d$-degenerate colouring}\label{sec:property}

Our proofs of the main results  follow the framework used by Kierstead and Kostochka~\cite{KK2008} with new ingredients, and  are based on analyzing the properties of a near-equitable
$d$-degenerate colouring.    In this section,  we study  these common properties.
We begin by reviewing basic facts about  $d$-degenerate graphs.

Let $G$ be an $n$-vertex $d$-degenerate graph on vertex set $V:=V(G)$. Then there exists a \emph{degeneracy ordering} of $V$ into $v_1, v_2,\ldots, v_n$ such that each $v_i$ has at most $d$ neighbours in $\{v_1,\ldots,v_{i-1}\}$. Let $p=\max\{0, i \in [n] \colon \text{$v_{i}$ has exactly $d$ neighbours in $\{v_1,\ldots,v_{i-1}\}$}\}$. Note that $p=0$ implies that $G$ is $(d-1)$-degenerate. We will always pick such an ordering with $p$ minimum. Set $V^{*}=\{v_1,\ldots,v_p\}$ if $p>0$ and $V^{*}=\emptyset$ otherwise. For a subset $W\subseteq V$, we define $W^{*}$ to be the corresponding $V^{*}$ for $G[W]$, that is, $W^{*}=(V(G[W]))^{*}$. By the minimality of $p$,
    we have
    \begin{equation}\label{eqn:V^*-vertex-degree}
    d_G(v, V^*) \ge d \quad \text{for  each $v\in V^*$.}
    \end{equation}

The following fact  will be frequently used in  our arguments, sometimes without explicit mention.

\begin{fact}\label{lemma:d-degenerate}~
	\begin{enumerate}[label={\rm(\alph*)}, start=1]
		\item  If $G'$ is obtained from $G$ by joining a new vertex to  at most $d$
		vertices in~$V^{*}$, then $G'$ is $d$-degenerate.\label{d-degenerate-a}
		\item 	 	If $G'$  is obtained
		from $G$ by joining a new vertex to  at most $d-1$
		vertices  of $G$, then $(V(G'))^{*}=V^{*}$.\label{d-degenerate-b}
\end{enumerate}
\end{fact}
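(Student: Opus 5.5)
Both parts come from modifying the chosen degeneracy ordering $v_1,\dots,v_n$ of $G$ (the one with $p$ minimum) by inserting the new vertex $u$ at a suitable position. The key observation is that, by the maximality in the definition of $p$, every $v_i$ with $i>p$ has at most $d-1$ neighbours in $\{v_1,\dots,v_{i-1}\}$. Each $v_i$ with $i\le p$ has at most $d$ such neighbours.

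For \ref{d-degenerate-a}, I would insert $u$ immediately after $v_p$, using the ordering $v_1,\dots,v_p,u,v_{p+1},\dots,v_n$.
\begin{itemize}
\item The vertex $u$ has all of its at most $d$ neighbours in $V^*=\{v_1,\dots,v_p\}$, so it has at most $d$ earlier neighbours.
\item A vertex $v_i$ with $i\le p$ has exactly the same earlier neighbours as before, hence at most $d$.
\item A vertex $v_i$ with $i>p$ had at most $d-1$ earlier neighbours in $G$ and gains at most one more, namely $u$.
\end{itemize}
Thus this is a degeneracy ordering of $G'$, and $G'$ is $d$-degenerate. If $p=0$, then $V^*=\emptyset$ and $u$ is isolated; it is placed first and the same count applies.

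For \ref{d-degenerate-b}, I would instead append $u$ at the end, using the ordering $v_1,\dots,v_n,u$. No $v_i$ gains an earlier neighbour, and $u$ has at most $d-1$ earlier neighbours. So this is a degeneracy ordering of $G'$, and its parameter $p'$ is exactly $p$: $u$ does not have exactly $d$ earlier neighbours, and every other count is unchanged.

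It remains to check that $p$ is minimal for $G'$. Suppose some degeneracy ordering of $G'$ had parameter $p''<p$. Deleting $u$ gives an ordering of $G$ in which every vertex's number of earlier neighbours can only decrease. Hence it is still a degeneracy ordering, and every vertex with exactly $d$ earlier neighbours already had exactly $d$ earlier neighbours in the ordering of $G'$. Deletion shifts indices down by at most one, so the new parameter is at most $p''<p$. This contradicts the minimality of $p$ for $G$. Therefore the minimal ordering $v_1,\dots,v_n,u$ of $G'$ has the same first $p$ vertices, and $(V(G'))^*=V^*$.

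The only delicate point is that $V^*$ is defined via a chosen minimal ordering rather than intrinsically. I would handle this by taking, for $G'$, the specific minimal ordering constructed above, consistent with the convention ``we will always pick such an ordering with $p$ minimum''.
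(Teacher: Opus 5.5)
Your proof is correct and follows the paper's argument: the new vertex goes immediately after $V^*$ for (a) and at the end for (b). For (b) you also make explicit the paper's one-line appeal to ``the minimality of $|V^*|$'', by deleting the new vertex from a hypothetical better ordering of $G'$, and your closing worry about the choice of ordering is harmless because any ordering with $p$ minimum lists exactly the $d$-core of $G$ (the largest vertex set inducing minimum degree at least $d$) in its first $p$ positions, so $V^*$ does not depend on the choice.
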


\begin{proof}  For~\ref{d-degenerate-a}, placing the new vertex immediately after vertices of $V^{*}$ in the degeneracy ordering gives a $d$-degeneracy ordering of $G'$.

For~\ref{d-degenerate-b}, placing the new vertex at the end of the degeneracy ordering of $G$ gives a $d$-degeneracy ordering of~$G'$. By the minimality of $|V^*|$, we have  $(V(G'))^{*}=V^{*}$.
\end{proof}

Let $G$ be an $n$-vertex graph.
For a given   $d$-degenerate $k$-colouring $\varphi$ of $G$, denote by  $\mathcal{C}^\varphi$ the set of all colour classes of $\varphi$.
For $i\in [k]$, let $C_i^\varphi$ be the $i$-th colour class of $\varphi$ and we assume that
\begin{align*}
|C_1^\varphi| \le |C_2^\varphi| \le \ldots \le |C_k^\varphi|.
\end{align*}
We omit the superscript $\varphi$ if it is clear from context.
We say that $\varphi$ is \emph{near-equitable} if
	\begin{enumerate}[label={\rm(E\arabic*)}]
		\item $\lfloor n/k \rfloor -1 \le |C_1^\varphi| \le \lfloor n/k \rfloor\le |C_2^\varphi| \le \ldots\le |C_{k-1}^\varphi| \le \lceil n/k \rceil  \le |C_k^\varphi| \le \lceil n/k \rceil +1$;\ \text{and}\label{dfn:nearequiable2}
		\item $2 \le |C_k^\varphi| -|C_1^\varphi|  \le 3 $. \label{dfn:nearequiable3}
	\end{enumerate}

	\begin{lem}\label{lem:induction-setting-up}
	Let $\Delta,d,k \in \mathbb{N}$ with  $k \ge (\Delta+1)/(d+1)$.
	Let $G$ be a graph with $\Delta(G)\leq\Delta$ and $uv \in E(G)$.
	If  $G-uv$ has an equitable $d$-degenerate $k$-colouring but $G$ does not,  then $G$ has a near-equitable $d$-degenerate $k$-colouring.
\end{lem}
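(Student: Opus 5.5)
The plan is to start from an equitable $d$-degenerate $k$-colouring $\varphi$ of $G-uv$ and move a single vertex to another class, in the spirit of the Kierstead--Kostochka framework. Since $G$ has no equitable $d$-degenerate $k$-colouring, $\varphi$ is not such a colouring of $G$. Adding $uv$ can only affect the class containing both endpoints, so $u$ and $v$ lie in a common class $C$ of $\varphi$, and $G[C]$ is not $d$-degenerate.

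First I would show that $d_G(u,C\setminus\{u\})\ge d+1$. Suppose instead $d_G(u,C\setminus\{u\})\le d$. The graph $G[C\setminus\{u\}]=(G-uv)[C\setminus\{u\}]$ is $d$-degenerate, being an induced subgraph of a $d$-degenerate graph. Appending $u$ at the end of its degeneracy ordering then gives a $d$-degeneracy ordering of $G[C]$, which is a contradiction. Consequently $u$ has at most $\Delta-d-1$ neighbours outside $C$.

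Next I would find a class $D\neq C$ with $d_G(u,D)\le d$. If every one of the other $k-1$ classes contained at least $d+1$ neighbours of $u$, then
\[
(k-1)(d+1)\ \ge\ \Delta+1-(d+1)\ =\ \Delta-d\ >\ \Delta-d-1,
\]
using $k\ge(\Delta+1)/(d+1)$. This contradicts the previous bound, so such a $D$ exists. Since $u\notin D$, we have $(G-uv)[D]=G[D]$, and $G[D\cup\{u\}]$ is $d$-degenerate by placing $u$ last in the ordering. We also have $G[C\setminus\{u\}]$ $d$-degenerate. Recolouring $u$ with the colour of $D$ therefore yields a $d$-degenerate $k$-colouring $\psi$ of $G$.

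The final step, which is mostly bookkeeping, is to verify that $\psi$ is near-equitable. Write $f=\lfloor n/k\rfloor$ and $c=\lceil n/k\rceil$. In $\varphi$ all classes have size in $[f,c]$. In $\psi$ only $C$ loses a vertex and only $D$ gains one, so at most one class has size below $f$ (namely $C\setminus\{u\}$, of size $f-1$) and at most one has size above $c$ (namely $D\cup\{u\}$, of size $c+1$). The class sizes sum to $n$, so after sorting the smallest class has size at most $n/k$, hence at most $f$, and the largest has size at least $c$. This gives \ref{dfn:nearequiable2}.

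For \ref{dfn:nearequiable3}, the difference between the largest and smallest class is at most $(c+1)-(f-1)\le 3$, because $c-f\le 1$. If this difference were at most $1$, then $\psi$ would be an equitable $d$-degenerate $k$-colouring of $G$, contradicting the hypothesis. Hence the difference is at least $2$, and $\psi$ is near-equitable.

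I expect no serious obstacle. The only delicate point is the counting step, where the hypothesis $k\ge(\Delta+1)/(d+1)$ must be combined with the bound of $\Delta-d-1$ neighbours of $u$ outside $C$.
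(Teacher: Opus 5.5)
Your proposal is correct and takes essentially the same approach as the paper. You recolour one endpoint of $uv$ (you use $u$, the paper uses $v$) into a class where it has at most $d$ neighbours; such a class exists by pigeonhole because $k(d+1)>\Delta$. You then observe that (E2) must hold, since otherwise the new colouring would be equitable. Your averaging check of (E1) simply makes explicit the relabelling step that the paper leaves implicit.
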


\begin{proof}
	Let $\psi$ be an equitable $d$-degenerate  $k$-colouring of $G-uv$.
	Since $G$ does not have an equitable $d$-degenerate $k$-colouring, there exists $h \in [k]$ such that $u, v\in C_h^\psi$ and $G[C_h^\psi]$ is not $d$-degenerate.
	Recall that $G[C_h^\psi] -uv$ is  $d$-degenerate, so $d_{G} (v, C_h^\psi) \ge d+1$ (or else we can place $v$ at the end of the degeneracy ordering).
	Note that $\sum_{i \in [k]} d_{G}(v, C_i^\psi) = d_{G}(v) \le \Delta < k(d+1)$.
	Hence there exists $\ell\in [k] \setminus \{h\}$ such that $d_G(v, C_\ell^\psi) \le d$.
	
	Let $\varphi$ be the $k$-colouring obtained from~$\psi$ by recolouring~$v$ with~$\ell$, that is $ \varphi (v) = \ell$ and $\varphi (x) = \psi(x)$ for all $ x \in V(G) \setminus \{v\}$.
	For all $i \in [k]$,
	\begin{align*}
	|C_i^\varphi| =
	\begin{cases}
	  |C_h^\psi| - 1 & \text{if $i = h$,}\\
	  |C_\ell^\psi| + 1 & \text{if $i = \ell$,}\\
	  |C_i^\psi|& \text{otherwise}.
	\end{cases}
	\end{align*}
	Recall that, for all $i \in [k]$, $|C_i^\psi| \in  \{ \lfloor n/k \rfloor, \lceil n/k \rceil\}$.
	By relabelling the colours such that $|C_1^\varphi|,\ldots, |C_k^\varphi|$ is a non-decreasing sequence, $\varphi$ is a desired near-equitable $d$-degenerate $k$-colouring of~$G$.
	Here \ref{dfn:nearequiable3} holds or else $\varphi$ is an equitable $d$-degenerate $k$-colouring of~$G$.
\end{proof}

	Let $G$ be a graph as in Lemma~\ref{lem:induction-setting-up} and $\varphi$ be a near-equitable $d$-degenerate $k$-colouring of $G$.	
	A vertex $v\in V(G)$  is \emph{movable to  a colour class $U \in \mathcal{C}^\varphi$} if
    $G[U\cup \{v\}]$ is $d$-degenerate.
    Clearly if $d_{G}(v,U^*) \le d$, then $v$  is movable to~$U$ by Fact~\ref{lemma:d-degenerate}\ref{d-degenerate-a}.
    We now define an auxiliary digraph~$\mathcal{D}^\varphi$ to monitor movable vertices as follows.
	Define~$\mathcal{D}^\varphi$ to be the digraph  such that $V(\mathcal{D}^\varphi) = \mathcal{C}^\varphi$ and $C_i^\varphi C_j^\varphi \in E(\mathcal{D}^\varphi)$ for distinct $i,j\in [k]$ if and only if there exists a vertex $v \in C_i^\varphi$ that is movable to~$C_j^\varphi$.
	For each $C_i^\varphi C_j^\varphi \in E(\mathcal{D}^\varphi)$, we pick a vertex $v \in C_i^\varphi$ that is movable to~$C_j^\varphi$ and call this vertex \emph{a representative vertex of $C_i^\varphi C_j^\varphi$}.

	Let $\mathcal{P}=U_1\ldots U_\ell$  be a directed path in $\mathcal{D}^\varphi$.
	When we say to \emph{move a vertex} of $U_1$ to $U_\ell$ along~$\mathcal{P}$, we mean that for each $i \in [\ell-1]$, we move a representative vertex of~$U_{i}U_{i+1}$ to~$U_{i+1}$.
	This operation results in a new $d$-degenerate $k$-colouring, where $|U_{\ell}|$ increases  by exactly one, $|U_{1}|$ decreases by exactly one,  and the sizes of remaining colour classes are unchanged.
	Therefore, to obtain an equitable $d$-degenerate $k$-colouring, it suffices to find a directed path from~$C_k$ to~$C_1$.
	
	We say that $V \in \mathcal{C}^\varphi$ is \emph{accessible to~$V' \in \mathcal{C}^\varphi$ in~$\mathcal{D}^\varphi$} if $\mathcal{D}^\varphi$ contains a directed path from~$V$ to~$V'$.
	We say that $V$ is \emph{accessible} if $V$ is accessible to some colour class $U\in \mathcal{C}^\varphi$ of minimum size, that is,  $|U|=|C^\varphi_1|$.
	Let $\mathcal{A}^\varphi$ be the set of accessible colour classes.
	By this definition, all sets of $\mathcal{C}^\varphi$ of size~$|C^\varphi_1|$ are contained in $\mathcal{A}^\varphi$.
	Also $C_k^\varphi \notin \mathcal{A}^\varphi$ (or else we can obtain an equitable $d$-degenerate $k$-colouring).
	Furthermore, let
	\begin{align*}
		A^\varphi &= \bigcup_{V\in \mathcal{A}^\varphi}V, & a^\varphi&= |\mathcal{A}^\varphi|,\\
		\mathcal{B}^\varphi&=\mathcal{C}^\varphi\setminus \mathcal{A}^\varphi, &
		B^\varphi& =\bigcup_{V\in \mathcal{B}^\varphi}V = V(G)\setminus A^\varphi, &
		b^\varphi= |\mathcal{B}^\varphi|.
	\end{align*}
 Note that $a^\varphi+b^\varphi=k$.
Again, we omit superscript $\varphi$ if it is clear from context.
By the definition of~$B^\varphi$, we have
	  \begin{equation}\label{eqn:B-vertex-degree-in-A}
	  d_G(x, V^*) \ge d+1 \text{ for all $x\in B^\varphi$ and all $V\in \mathcal{A}^\varphi$.}
	  \end{equation}

We partition $\mathcal{D}^\varphi[\mathcal{A}^\varphi]$ further.
Suppose that we have already defined vertex-disjoint induced digraphs $\mathcal{D}_1^\varphi, \ldots, \mathcal{D}_{i-1}^\varphi$ of $\mathcal{D}^\varphi[\mathcal{A}^\varphi]$.
Let $j_i=\min\{j\in [k] \colon |C_j| = |C_1|,C_j\in \mathcal{C}^\varphi \setminus \bigcup_{i' \in [i-1]} V(\mathcal{D}_{i'}^\varphi)\} $.
If no such  $j_i$ exists, then we terminate the process.
Let $\mathcal{A}_i^\varphi$ be the set of colour classes in $\mathcal{A}^\varphi \setminus \bigcup_{i' \in [i-1]} V(\mathcal{D}_{i'}^\varphi )$ that are accessible to~$C_{j_i}$.
Let $\mathcal{D}_i^\varphi = \mathcal{D} [\mathcal{A}_i^\varphi]$.
We call $C_{j_i}$ the \emph{terminal} of~$\mathcal{D}_i^\varphi$.

Let $\mathcal{D}_1^\varphi, \ldots, \mathcal{D}_q^\varphi$ be obtained by the algorithm above, which are uniquely defined by~$\varphi$.
By our construction, $\mathcal{D}_i^\varphi$ contains a path from each of its vertices to its terminal  and $\mathcal{A} ^\varphi= \bigcup_{i \in [q]} V(\mathcal{D}_i^\varphi)$.
Most of the time, we are only interested in $\mathcal{D}_q^\varphi$ and its terminal, which we denote by $\mathcal{D}_-^\varphi$ and $C_-^\varphi$, respectively.
Furthermore,
\begin{align}\label{eq:non-accessible}
d_G(v, U^*) \ge d+1 \text{ for any $v\in  V(\mathcal{D}_-^\varphi)$ and $U \in \mathcal{A}^\varphi \setminus V(\mathcal{D}_-^\varphi)$}.
\end{align}

In the next lemma, we show that $|V(\mathcal{D}_-)|\ge 2$, so $a \ge 2$.

\begin{lem}\label{lem:a-lower-bound2}
Let $\Delta, d,k\in \mathbb{N}$ with $k \ge (\Delta+1)/(d+1)$.
Let $G$ be a graph with $\Delta(G)\le \Delta$.
Suppose that $G$ has no equitable $d$-degenerate $k$-colouring, but $G$ has a near-equitable $d$-degenerate $k$-colouring~$\varphi$.
Then $|V(\mathcal{D}_-)|\ge 2$.
\end{lem}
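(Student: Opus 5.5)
I would argue by contradiction: suppose $V(\mathcal{D}_-)=\{C_-\}$, so no colour class other than $C_-$ itself lies in the last accessible piece. The aim is to turn this into a degree-counting contradiction with $\Delta(G)\le \Delta < k(d+1)$, using \eqref{eqn:V^*-vertex-degree} for the vertices of $C_-^*$ and \eqref{eqn:B-vertex-degree-in-A} for vertices outside the accessible part.

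\textbf{Step 1 (which classes can send a vertex to $C_-$).} If a class $W\ne C_-$ has a vertex movable to $C_-$, then $W\in\mathcal{A}$. By the construction of $\mathcal{D}_1,\dots,\mathcal{D}_q$ and the assumption $V(\mathcal{D}_-)=\{C_-\}$, such a $W$ must lie in some earlier $\mathcal{D}_i$ with $i<q$. I would first try to rule this out. Suppose $W\in V(\mathcal{D}_i)$ has an arc to $C_-$. Then the terminal $C_{j_i}$ and $C_-$ both have the minimum size $|C_1|$, and $W$ reaches both of them. I would try to show that the algorithm could then be rerun, or the colours relabelled, so that $W$ (and what reaches it) is attached to $C_-$ instead. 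Alternatively, I would use that, by \ref{dfn:nearequiable2}, $C_1$ is the unique class of size $\lfloor n/k\rfloor-1$ when $|C_1|<\lfloor n/k\rfloor$, so $q=1$ in that case. I expect the conclusion to be that every vertex $x\in V(G)\setminus C_-$ satisfies $d_G(x,C_-^*)\ge d+1$; in particular this holds for every $x\in B$ by \eqref{eqn:B-vertex-degree-in-A}.

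\textbf{Step 2 (double counting).} Every $y\in C_-^*$ has at least $d$ neighbours inside $C_-^*$ by \eqref{eqn:V^*-vertex-degree}, hence at most $\Delta-d$ neighbours outside $C_-$. Counting edges between $C_-^*$ and $V(G)\setminus C_-$ then gives
\[
(d+1)\bigl(n-|C_-|\bigr)\le e_G\bigl(C_-^*,V(G)\setminus C_-\bigr)\le(\Delta-d)|C_-^*|\le(\Delta-d)|C_-|.
\]
Since $|C_-|=|C_1|\le n/k$, the left-hand side is at least $(d+1)(k-1)|C_-|$. Combining these yields $(d+1)(k-1)\le \Delta-d$, that is, $k(d+1)\le \Delta+1$, which contradicts $\Delta<k(d+1)$ except in the boundary case $k(d+1)=\Delta+1$. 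In that boundary case I would use the strict slack coming from $|C_k|\ge|C_1|+2$ in \ref{dfn:nearequiable3}, which gives $n-|C_-|>(k-1)|C_-|$.

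\textbf{Main obstacle.} The hard part is Step 1. Arcs into $C_-$ from earlier pieces $\mathcal{D}_i$ are not excluded by the definitions themselves. If Step 1 cannot be done cleanly, my fallback is to restrict the count in Step 2 to $B$ together with the classes of size $|C_1|$. For those classes I would separately argue, via \eqref{eq:non-accessible}, that their vertices cannot move to $C_-$. I would then redo the inequality using $|B|\ge |C_k|\ge |C_-|+2$.
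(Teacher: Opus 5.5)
There is a genuine gap, and it is the one you flagged yourself: Step~1 cannot be established, and Step~2 as written depends on it. The construction of $\mathcal{D}_1,\dots,\mathcal{D}_q$ only forbids arcs \emph{out of} a later piece into an earlier one. A class $W\in V(\mathcal{D}_i)$ with $i<q$ may well have an arc into $C_-$; it then reaches both $C_{j_i}$ and $C_-$ and is simply placed in the earlier piece. Relabelling or rerunning the algorithm does not remove such an arc, because the arc is a property of $\mathcal{D}$ and not of the labelling. In any case, the lemma concerns the $\mathcal{D}_-$ produced by the fixed rule.

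Your remark that $q=1$ when $|C_1|=\lfloor n/k\rfloor-1$ is correct: then $\mathcal{A}=\{C_-\}$ and Step~1 follows from \eqref{eqn:B-vertex-degree-in-A}. But it does not cover the case of several classes of size $\lfloor n/k\rfloor$. Your fallback reads \eqref{eq:non-accessible} backwards. That inequality says vertices of classes in $\mathcal{D}_-$ cannot move to classes in $\mathcal{A}\setminus V(\mathcal{D}_-)$; it does not say that vertices of other minimum-size classes cannot move to $C_-$. Counting only the edges between $C_-^*$ and $B$ is also not enough when $a\ge 2$: it falls short of what is needed by roughly $(d+1)(a-1)|C_-^*|$.

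The missing idea is to count the contribution of the other accessible classes from the side of $C_-^*$, using \eqref{eq:non-accessible} in its correct direction. If $V(\mathcal{D}_-)=\{C_-\}$, then no vertex of $C_-$ is movable to any $U\in\mathcal{A}\setminus\{C_-\}$. Hence every $v\in C_-^*$ has at least $d+1$ neighbours in each such $U^*$, on top of at least $d$ neighbours in $C_-^*$. Summing over $v\in C_-^*$ and adding $e_G(C_-^*,B)\ge (d+1)|B|$ gives
$\Delta|C_-^*|\ge\bigl((a-1)(d+1)+d\bigr)|C_-^*|+(d+1)|B|$.

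The slack comes from $|B|\ge (b-1)(|C_-|+1)+|C_-|+2\ge b|C_-^*|+b+1$. This holds because every class of size $|C_1|$ is accessible, and $|C_k|\ge|C_-|+2$. Substituting yields $\Delta|C_-^*|\ge\bigl(k(d+1)-1\bigr)|C_-^*|+(b+1)(d+1)>\Delta|C_-^*|$, a contradiction. This is the paper's argument, and it never needs to know whether other classes can send vertices into $C_-$.
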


\begin{proof}
Suppose to the contrary that $|V(\mathcal{D}_-)| = 1$, so $ V(\mathcal{D}_-) = \{C_-\}$.
By the definition of~$\mathcal{A}$ and \ref{dfn:nearequiable3}, we have $|V|\ge |C_-|+1$ for any $V\in \mathcal{B}$ and $|C_k| \ge |C_-|+2$.
Thus
\begin{align}
\label{eqn:a-lower}
|B| \ge (b-1)( |C_-| +1)+ |C_-| +2 \ge b |C_-^*|+b+1.
\end{align}
Hence,
 		\begin{align*}
 			\Delta |C_-^*|  &
			\ge  \sum_{v\in C_-^*} d_G(v)  = \sum_{v\in C_-^*} \left( d_G(v,A \setminus C_-) + d_G(v,C_-) + d_G(v,B) \right)\\
			& \ge \sum_{v\in C_-^*} \left( d_G(v,A \setminus C_-) + d_G(v,C_-^*) \right) + e(C_-^*,B)\\
			& \overset{\mathclap{\text{\eqref{eqn:V^*-vertex-degree},\eqref{eqn:B-vertex-degree-in-A},\eqref{eq:non-accessible}}}}\ge
			\quad	\quad ((a-1)(d+1)+d) |C_-^*| +|B|(d+1)\\
 			& \overset{\mathclap{\text{\eqref{eqn:a-lower}}}} \ge  \quad ((a+b)(d+1)-1) |C_-^*| +(b+1)(d+1)
 			= (k(d+1)-1) |C_-^*|+(b+1)(d+1) \\
			 &\ge \Delta |C_-^*| + (b+1)(d+1),
 		\end{align*}
a contradiction.
 \end{proof}

A vertex $V\in V(\mathcal{D}_-)$ is a \emph{terminal-cut} of $\mathcal{D}_-$ if $\mathcal{D}_--V$ has a vertex that is non-accessible to~$C_-$ in~$\mathcal{D}_--V$.
Since $|V(\mathcal{D}_-)| \ge 2$ by Lemma~\ref{lem:a-lower-bound2}, $C_-$ is a terminal-cut.
Let $U_- \in V(\mathcal{D}_-)$ be such that the number of vertices in $\mathcal{D}_- -U_-$ that are non-accessible to $C_-$ is minimal.
(If there are multiple choices for~$U_-$, then  we fix one arbitrarily.)
We let $\mathcal{T}^\varphi$ be the set of vertices of~$\mathcal{D}_- -U_-$  that are
non-accessible to~$C_-$ in~$\mathcal{D}_- -U_-$.
Let
\begin{align*}
T^\varphi=\bigcup_{V\in \mathcal{T}^\varphi} V\quad
 \text{ and }\quad
 t^\varphi=|\mathcal{T}^\varphi|.
\end{align*}

We say that a near-equitable $d$-degenerate $k$-colouring~$\varphi$ of~$G$ is \emph{minimal} if
\begin{align*}
	( |C_k^\varphi|-|C_-^\varphi|,\, b^\varphi,\,   \sum\limits_{V\in \mathcal{T}^\varphi} |V^*|)
\end{align*}
is  lexicographically smallest over all near-equitable $d$-degenerate $k$-colourings of~$G$.

We are now ready to establish several key structural properties of~$\varphi$. These properties describe how vertices may move among colour classes in a minimal near-equitable $d$-degenerate colouring and provide useful bounds on the sizes of the sets in~$\mathcal{A}$ and~$\mathcal{B}$.

In Lemma~\ref{lemma:common-properties}, statement~\ref{a} shows that each colour class in~$\mathcal{A}$ has size either $|C_-|$ or $|C_-|+1$. Statement~\ref{b} provides a lower bound on~$|B|$. Statements~\ref{c}-\ref{g} describe structural properties of the colour classes in~$\mathcal{T}$. In particular, statement~\ref{e} identifies a colour class~$W$, which plays a central role in the proofs of our main theorems. Finally, the bound on~$t$ in statement~\ref{h} is crucial for deriving a contradiction through edge-counting arguments in the proof of Theorem~\ref{thm:treecolouring}.
	
	\begin{lem}\label{lemma:common-properties}
	Let $\Delta, d,k \in \mathbb{N}$ with $k \ge (\Delta+1)/(d+1)$, and $G$ be an $n$-vertex graph with $\Delta(G)\leq\Delta$.
		Suppose that $G$ has no equitable $d$-degenerate $k$-colouring, but $G$ has a minimal near-equitable $d$-degenerate $k$-colouring~$\varphi$.
	Then the following statements hold:
\begin{enumerate}[label={\rm (\alph*)},start=1]
\item  for all $V\in \mathcal{A}$,  $|V|-|C_-| \le 1$ and $V^*\neq \emptyset$; \label{a}
\item  for all $V\in \mathcal{A}$, $|B| \ge b |V|+1$;
 \label{b}		
\item for all $V\in \mathcal{T}$, each $v \in V$ is non-movable to any colour class in $\mathcal{A}\setminus ( \mathcal{T}\cup \{U_-\})$, in particular, $d_G(v,A \setminus (T \cup U_-)) \ge (d+1)(a-1-t)$; \label{c}
\item for all distinct $V,V'\in \mathcal{T}$, $\mathcal{D}-V$ contains a directed $(V',C_-)$-path; \label{d}
\item there exists $W\in \mathcal{T}$ such that for all $w\in W^{*}$ and $V\in \mathcal{T}$, we have $d_G(w,V) \ge d$.\label{e}	
\end{enumerate}			
Furthermore,  suppose that for all $x \in B$, $G[B]-x$ has an equitable $d$-degenerate $b$-colouring.
Then the following statements hold:	
\begin{enumerate}	[label={\rm (\alph*)},start=6]		
\item
if $v \in V^*$, $V  \in \mathcal{T}$ and $x \in N_G(v) \cap B$ such that $d_G(x,V^*) =d+1$, then $v$ is non-movable to any other colour class in~$\mathcal{A}$, in particular, $d_G(v,A) \ge (d+1)a-1$;
	  \label{f}
\item   if $ v \in V \in \mathcal{T}$ and there exist distinct $x,y \in B\cap N_{G}(v)$ such that
$d_G(x,V)  = d_G(y,V) =d+1$, then $x \sim y$ in~$G$; \label{g}		
\item   if $\tau \le d+1$ and $ k \ge \frac{1+\tau}{\tau} \frac{\Delta+1}{d+2}$, then $t < \tau b$. \label{h}
\end{enumerate}
\end{lem}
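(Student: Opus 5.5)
Every part rests on one device: we exhibit a near-equitable $d$-degenerate $k$-colouring $\varphi'$ with a lexicographically smaller triple $(|C_k|-|C_-|,\,b,\,\sum_{V\in\mathcal{T}}|V^*|)$, contradicting the minimality of $\varphi$, or else an equitable colouring, contradicting the hypothesis. The recolourings are single-vertex moves or moves along directed paths of $\mathcal{D}$. We check them against Fact~\ref{lemma:d-degenerate}, and for degree counting we use \eqref{eqn:V^*-vertex-degree}, \eqref{eqn:B-vertex-degree-in-A} and \eqref{eq:non-accessible} as in Lemma~\ref{lem:a-lower-bound2}.

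\emph{Parts (a) and (b).} For (a), suppose some $V\in\mathcal{A}$ has $|V|\ge|C_-|+2$. Then $V=C_k$, or $V$ is a class of size $\lceil n/k\rceil$ with $|C_1|=\lfloor n/k\rfloor-1$. In the second case, moving a vertex of $V$ along a path to a minimum class lowers $|C_k|-|C_-|$ or yields an equitable colouring. If $V^*=\emptyset$, then $G[V]$ is $(d-1)$-degenerate, so every vertex of $B$, in particular of $C_k$, is movable to $V$ by Fact~\ref{lemma:d-degenerate}\ref{d-degenerate-b}. This contradicts $C_k\notin\mathcal{A}$. Part (b) follows from (a) by the same counting as \eqref{eqn:a-lower}: every class in $\mathcal{B}$ has size at least $|C_-|+1\ge|V|$, and $|C_k|\ge|V|+1$.

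\emph{Parts (c), (d) and (e).} For (c), suppose $v\in V\in\mathcal{T}$ is movable to $U\in\mathcal{A}\setminus(\mathcal{T}\cup\{U_-\})$. Then $V\to U$ is an arc, and $U$ reaches $C_-$ in $\mathcal{D}_--U_-$, contradicting the fact that $V$ is non-accessible there. The degree bound then comes from \eqref{eq:non-accessible} together with the fact that a non-movable vertex has at least $d+1$ neighbours in $U^*$. For (d), we use the choice of $U_-$ as the vertex minimising the number of non-accessible classes. If $V'$ had no path avoiding $V$, then $V$ would be a terminal-cut cutting off a set strictly inside $\mathcal{T}$, since $V'$ and everything it blocks lie in $\mathcal{T}$ and $V$ itself is excluded. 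This contradicts the choice of $U_-$. For (e), we take $W$ to be a sink-like class of $\mathcal{D}[\mathcal{T}]$, chosen for example as the last class on a longest path inside $\mathcal{T}$, or via the minimality of $\sum|V^*|$. Suppose some $w\in W^*$ has $d_G(w,V)<d$. Then $w$ is movable to $V$, and we move $w$ into $V$ while compensating along a path from (d). The new colouring keeps $b$ but shrinks $\sum_{V\in\mathcal{T}}|V^*|$, since $w$ enters $V$ with fewer than $d$ back-neighbours and so does not enlarge $V^*$ by Fact~\ref{lemma:d-degenerate}\ref{d-degenerate-b}, while $W^*$ loses $w$. This is a contradiction.

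\emph{Parts (f), (g) and (h).} For (f) and (g), we use the extra hypothesis. Take $x\in B$ with $d_G(x,V^*)=d+1$. We recolour $G[B]-x$ equitably with $b$ colours, move $x$ into $V$ after first moving $v$ out of $V$, and then push along $\mathcal{A}$. This either decreases $b$ or produces an equitable colouring. Removing the neighbour $v$ brings $d_G(x,V^*\setminus\{v\})$ down to $d$, so $x$ becomes movable by Fact~\ref{lemma:d-degenerate}\ref{d-degenerate-a}. For (g), if $x\not\sim y$, then both $x$ and $y$ can be absorbed in a similar double move, contradicting minimality. For (h), we double count $e(T^*\text{-type vertices},B)$, using (b), (c) and \eqref{eqn:B-vertex-degree-in-A}. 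Comparing this with the degree bound $\Delta$ and with $k\ge\frac{1+\tau}{\tau}\frac{\Delta+1}{d+2}$ gives $t<\tau b$.

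The main obstacle is the exchange arguments in (e) and (f). There one must verify that the modified colouring stays near-equitable, that $\mathcal{A}$, $\mathcal{T}$ and $U_-$ behave as claimed, and that the lexicographic parameter strictly drops. We will handle these carefully via Fact~\ref{lemma:d-degenerate}.
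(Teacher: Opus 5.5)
\textbf{Verdict.} Parts (a)--(d) match the paper's arguments, and (f) is close, but there are genuine gaps in (e), (g) and (h).

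\textbf{Minor points in (a) and (f).} In (a), it is Fact~\ref{lemma:d-degenerate}\ref{d-degenerate-a}, not \ref{d-degenerate-b}, that makes every vertex movable to a class with empty core. For (f), you should make three things explicit. First, $V'$ is the class that $v$ is assumed movable to. Second, $V'\in\mathcal{T}\cup\{U_-\}$ by (c). Third, you push along a $(V',C_-)$-path avoiding $V$, supplied by (d), so that $V$ is untouched.

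\textbf{Gap in (e).} Moving a single $w\in W^*$ into $V$ changes the size profile, and ``compensating along a path from (d)'' cannot repair this.
\begin{itemize}
\item Such a path ends at $C_-$, not at $W$. The net effect is that $W$ loses a vertex and $C_-$ gains one, which creates a class of size $|C_-|-1$ whenever $|W|=|C_-|$.
\item The intermediate moves along a path insert vertices that are merely movable. These may have $d$ neighbours in the target core and enlarge it, so $\sum_{V\in\mathcal{T}}|V^*|$ need not drop.
\item Nothing in your sketch justifies that $b$ does not increase.
\item Taking $W$ as the last class on a longest path in $\mathcal{D}[\mathcal{T}]$ does not give a sink.
\end{itemize}
The missing idea is the auxiliary digraph $\mathcal{D}^*$ on $\mathcal{T}$, with an arc $V\to U$ whenever some $v\in V^*$ has $d_G(v,U)\le d-1$. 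A sink of $\mathcal{D}^*$ is the required $W$. If $\mathcal{D}^*$ has a directed cycle, move one such vertex along every arc of the cycle simultaneously. This has three effects:
\begin{itemize}
\item all class sizes stay fixed;
\item by Fact~\ref{lemma:d-degenerate}\ref{d-degenerate-b}, no core grows (the threshold $d-1$ is exactly what makes this work);
\item the cores of the classes on the cycle strictly shrink.
\end{itemize}
You still need a separate argument, as in Claim~\ref{claim:Ci}, that every accessibility relation survives the rotation, so that $b$ does not increase.

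\textbf{Gap in (g).} ``Absorbing both $x$ and $y$ in a double move'' is not an argument; the actual mechanism is a swap.
\begin{itemize}
\item By (f), $d_G(v,A)\ge(d+1)a-1$, so $d_G(v,B\setminus\{x\})\le\Delta-(d+1)a\le(d+1)b-1$.
\item Hence $v$ has at most $d$ neighbours in some class of an equitable $d$-degenerate $b$-colouring of $G[B]-x$, and $v$ can be placed there while $x$ replaces $v$ in $V$.
\item The class $(V\setminus\{v\})\cup\{x\}$ stays accessible, using (d).
\item Since $x\not\sim y$, the vertex $y$ has exactly $d$ neighbours in this class. So $y$'s class becomes accessible and $b$ strictly drops.
\end{itemize}

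\textbf{Gap in (h).} A direct double count of edges between $B$ and $T^*$ against $\Delta$ is too weak. By \eqref{eqn:B-vertex-degree-in-A}, $e_G(B,T^*)\ge(d+1)t|B|$. The per-vertex upper bound coming from (c) and \eqref{eqn:V^*-vertex-degree} only yields a contradiction if roughly $(d+1)(k-t)>\Delta$. This already fails for $t\ge 1$ when $d=1$ and $2k=\Delta+2$.

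The paper instead uses a weighting. Let $B(v)$ be the set of $x\in N_G(v,B)$ with $d_G(x,V)=d+1$, where $V\ni v$. Give each pair $(x,v)$ with $x\in B(v)$ weight $b/|B(v)|$.
\begin{itemize}
\item The total weight is at most $t(|B|-1)$, by (b).
\item Each $x\in B$ receives weight at least $t$. This uses $|B(v)|\le d_G(x,B)+1$, which is exactly where (g) enters, since $B(v)$ is a clique. It also uses that at least $t-\tau b+d_G(x,B)+1$ classes $V\in\mathcal{T}$ satisfy $d_G(x,V)=d+1$, which comes from the hypothesis on $k$ together with $\tau\le d+1$.
\end{itemize}
Your plan for (h) never invokes (g), which is why it cannot close.
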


	\begin{proof}	
	Proof of~\ref{a}: For the first part, suppose otherwise that $|V|-|C_-|  \ge 2$ for some $V\in \mathcal{A}$. Note that $C_k \notin \mathcal{A}$.
	Since $V \in \mathcal{A}$, \ref{dfn:nearequiable2} implies that
	\begin{align*}
	\text{$|V|  =  \lceil n/k \rceil$,  $n \not \equiv 0 \bmod{k}$,  $|C_-| = \lfloor n/k \rfloor-1$, and $C_- = C_1$.}
	\end{align*}
	By the definition of $\mathcal{A}$, there exists a directed $(V,U)$-path~$\mathcal{P}$ for some $U \in \mathcal{A}$ with $|U| = |C_-|$.
	Define $\psi$ to be the $d$-degenerate $k$-colouring obtained from $\varphi$ by moving a vertex of~$V$ to~$U$ along $\mathcal{P}$.
	
	If $|C_k| = \lceil n/k \rceil$, then $\psi$ is an equitable $d$-degenerate $k$-colouring of~$G$, a contradiction.
	We may assume that $|C_k| = \lceil n/k \rceil+1$ by~\ref{dfn:nearequiable2} and so $|C_k| - |C_-| = 3$.
	Note that $\psi$ is a near equitable $d$-degenerate $k$-colouring with $|C_k^\psi| - |C_-^\psi| = 2$, contradicting the minimality of~$\varphi$.
	
For the second part, suppose that $V^*=\emptyset$ for some  $V\in \mathcal{A}$.
Then $G[V]$ is $(d-1)$-degenerate. Thus, $C_k$ is accessible to $V$ by Fact~\ref{lemma:d-degenerate}(a) and is also accessible to $C_-$, contradicting $C_k\in \mathcal{B}$.
	
	Proof of~\ref{b}:
	Consider $V \in \mathcal{A}$.
	By~\ref{dfn:nearequiable3} and~\ref{a}, $|C_k| \ge |C_-|+2 \ge  |V| +1$.
	By the definition of~$\mathcal{A}$ and~\ref{a}, for all $U \in \mathcal{B}$, we have $|U| \ge |C_-| +1 \ge |V|$.
	Hence, \ref{b} follows.

		Proof of~\ref{c}:
	Suppose to the contrary that there exists a vertex $v \in V\in \mathcal{T}$ that is movable to some $V'\in \mathcal{A}\setminus (\mathcal{T}\cup \{U_-\}) $.
	Hence $VV' \in E(\mathcal{D})$.
	Note that $V' \in V(\mathcal{D}_-)$ by our definition of $\mathcal{D}_1, \ldots, \mathcal{D}_q$ with $\mathcal{D}_q = \mathcal{D}_-$.
	Thus there is a directed  $(V', C_-)$-path in~$\mathcal{D}_- - U_-$ as $V' \notin \mathcal{T}$.
	Since $VV' \in E(\mathcal{D})$, there is a directed $(V, C_-)$-path in~$\mathcal{D}_- - U_-$,  a contradiction to the assumption that $V\in \mathcal{T}$.

	Proof of~\ref{d}: Suppose to the contrary that there is no directed $(V',C_-)$-path in $\mathcal{D}-V$ for some distinct $V,V' \in \mathcal{T}$.
	Recall that $V,V' \in V(\mathcal{D}_-)$.
	Thus $V$ is a terminal-cut of~$\mathcal{D}_-$.
	Let $\mathcal{T}'$ be the set of vertices of~$\mathcal{D}_- - V$  that are
non-accessible to~$C_-$ in~$\mathcal{D}_- - V$.
	Since $V \in \mathcal{T}$, all directed $(V,C_-)$-paths go through~$U_-$ and so $U_- \not\in \mathcal{T}'$.
	Also if a colour class $U \in  V(\mathcal{D}_-) \setminus \{U_-\}$ is accessible to $C_-$ in~$\mathcal{D}_- - U_-$, then it is accessible to $C_-$ in~$\mathcal{D}_- - V$.
	Hence $\mathcal{T}' \subsetneq \mathcal{T}$, contradicting the minimality of $|\mathcal{T}|$.

	Proof of~\ref{e}:
	For all $V \in \mathcal{T}$ and all $ v \in V^*$, we have $d_G(v,V) \ge d$ by~\eqref{eqn:V^*-vertex-degree}.
	Let $\mathcal{D}^*$ be the digraph on $\mathcal{T}$ 	such that $VU\in E(\mathcal{D}^*)$ if there exists $v\in V^*$ such that $d_G(v,U) \le d-1$.
	If $\mathcal{D}^*$ is acyclic, then it has a vertex $W$ that has outdegree zero with the desired property.
	
	Hence we may assume that $\mathcal{D}^*$ contains a cycle $C_{i_1}\ldots C_{i_p}C_{i_1}$.
	For each $j \in [p]$, pick $v_j \in C_{i_j}^*$ such that  $d_G(v_{j},C_{i_{j+1}}) \le d-1$, where we take $i_{p+1} = i_1$.
	Define $\psi$ to be the $k$-colouring obtained by moving~$v_{j}$ to~$C_{i_{j+1}}$ for all $j \in [p]$.
	By Fact~\ref{lemma:d-degenerate}\ref{d-degenerate-a}, $\psi$ is a $d$-degenerate $k$-colouring of~$G$.
	We have $|C_i^\psi| = |C_i^\varphi|$ for all $i \in [k]$, so $\psi$ is near-equitable.
	By Fact~\ref{lemma:d-degenerate}\ref{d-degenerate-b},
	\begin{align}
	\text{$(C_{i}^{\psi})^* \subseteq (C_{i}^{\varphi})^*$ for all $i \in [k]$, with equality holding if and only if $i \notin \{i_j:j \in [p]\}$.} \label{eqn:d}
	\end{align}
	
    \begin{claim}\label{claim:Ci}
    If $C_i^\varphi$ is accessible to~$C_j^\varphi$ in~$\mathcal{D}^\varphi$, then $C_i^\psi$ is also accessible to~$C_j^\psi$ in~$\mathcal{D}^\psi$.
    \end{claim}

    \begin{proofclaim}
		It suffices to show that for any edge $C_i^\varphi C_{i'}^\varphi \in E(\mathcal{D}^\varphi)$, there exists a directed walk from~$C_i^\psi$ to~$C_{i'}^\psi$ in $\mathcal{D}^\psi$.
Let $v$ be a representative vertex of $C_i^\varphi C_{i'}^\varphi$.

First we show that $v$ is movable to~$C_{i'}^\psi$.
If $i' \notin \{i_j\colon j \in [p]\}$, then this clearly holds as $C_{i'}^\psi = C_{i'}^\varphi$.
If $i' = i_j$ for some $j \in  [p]$, then $C_{i_j}^\psi = (C_{i_j}^\varphi \setminus \{v_j\}) \cup \{v_{j-1}\}$, where $v_0 = v_p$.
Since $v$ is a representative vertex of~$C_i^\varphi C_{i'}^\varphi$, we have $G[C_{i'}^{\varphi} \cup \{v\}]$ is $d$-degenerate.
  Note that
    \begin{align*}
        d_G(v_{j-1},C^{\varphi}_{i'} \cup \{v\}) \le d_G(v_{j-1},C^{\varphi}_{i'})+1\le d,
    \end{align*}
    so $G[C_{i'}^{\psi} \cup \{v,v_{j-1}\}]$ is $d$-degenerate by Fact~\ref{lemma:d-degenerate}\ref{d-degenerate-a}.
Hence $v$ is movable to $C_{i'}^\psi$.

Therefore if $ v \in C_i^\psi$, then $C_i^\psi C_{i'}^\psi \in E(\mathcal{D}^\psi)$.
Suppose that $v \notin C_i^\psi$, so $v = v_{j_0}$ for some $j_0 \in [p]$.
Moreover, $i = i_{j_0}$ and $v \in C^{\psi}_{i_{j_0+1}}$.
Since for all $j \in [p]$, we have $v_j\in C_{i_{j+1}}^\psi$ and $G[C_{i_j}^{\psi} \cup \{v_j\}] = G[C_{i_j}^{\varphi} \cup \{v_{j-1}\}]$ is $d$-degenerate, we deduce that $C^{\psi}_{i_p}C^{\psi}_{i_{p-1}}\ldots C^{\psi}_{i_1}C^{\psi}_{i_p}$ is a directed cycle in~$\mathcal{D}^{\psi}$.
Therefore, $C_{i_{j_0}}^{\psi} C_{i_{j_0-1}}^\psi \ldots C_{i_{{j_0+1}}}^{\psi} C_{i'}^\psi$ is a directed walk in $\mathcal{D}^\psi$, where the sub-indices are taken modulo $p$.
This implies that $C_{i}^\psi$ is accessible to $C_{i'}^\psi$, as required.
    \end{proofclaim}

By Claim~\ref{claim:Ci}, we have  $\mathcal{A}^\varphi \subseteq \mathcal{A}^\psi $.
By the minimality of~$\varphi$, we have $\mathcal{A}^\varphi = \mathcal{A}^\psi $.
However, \eqref{eqn:d} implies that
\begin{align*}
	\sum_{V \in \mathcal{T}^\psi}|V^*| \leq \sum_{ V \in \mathcal{T}^\varphi}|V^*|-p,
\end{align*}
contradicting the minimality of~$\varphi$.

	Proof of~\ref{f}: Suppose to the contrary that there exist $v \in V^*$, $V \in \mathcal{T}$ and $x \in N_G(v) \cap B$ such that $d_G(x,V^*) =d+1$ and $v$ is movable to some colour class~$V' \in \mathcal{A}$.
	By~\ref{c}, $V' \in \mathcal{T} \setminus \{U_-\}$.
	By~\ref{d}, there is a directed $(V',C_-)$-path~$\mathcal{P}$ in $\mathcal{D}-V$.
	Now we move a vertex of $V'$ to $C_-$ along $\mathcal{P}$.
	Note that $v$ remains in $V$.
	We now move $x$ to~$V$ and $v$ to~$V'$.
	This yields an equitable $d$-degenerate $a$-colouring~$\psi_1$ on~$G[A \cup \{x\}]$.
	By the assumption, $G[B]-x$ has an equitable $d$-degenerate $b$-colouring~$\psi_2$.
	Combining $\psi_1$ and~$\psi_2$, we obtain an equitable $d$-degenerate $k$-colouring of~$G$, a contradiction.
	
	Proof of~\ref{g}:
	Consider $ v \in V \in \mathcal{T}$ and distinct $x,y \in B\cap N_{G}(v)$ such that
$d_G(x,V)  = d_G(y,V) =d+1$.
    Suppose to the contrary that $x\not\sim y$.

    By~\eqref{eqn:B-vertex-degree-in-A}, we have $d_G(x,V^*)  =d+1$ and so $v \in V^*$.
	  By~\ref{f}, we have
    \begin{align*}
    d_G(v,B \setminus \{x\}) & = d_G(v) - 1-d_G(v,A) \le \Delta -1- ((d+1)a - 1)\\
    & \le (d+1)k-1 - (d+1)a = (d+1)b-1.
    \end{align*}
    By our assumption let $\psi_B$ be an equitable $d$-degenerate $b$-colouring of $G[B]- x$.
    By~\ref{dfn:nearequiable2}, we deduce that each colour class of~$\psi_B$ has size at most $\lceil n/k \rceil$.
    Since $v\sim x$ and $d_G(v,B \setminus \{x\})\le (d+1)b-1$, there exists a colour class~$U$ in~$\psi_B$ such that $d_G(v,U) \le d$.
    Define $\psi_B'$ to be a $d$-degenerate $b$-colouring of $G[(B \setminus \{x\}) \cup \{v\}]$ by adding~$v$ to~$U$.
    Let $\psi_A$ be the $d$-degenerate colouring of~$G[(A \setminus \{v\}) \cup \{x\}]$ obtained from $\varphi$ restricted on~$A$ by swapping~$v$ with~$x$.
    That is, $\psi_A(u) = \varphi(u)$ for all $u \in A \setminus \{v\}$ and $\psi_A(x) = \varphi(v)$.
    Combine~$\psi_A$ and~$\psi_B'$ to obtain a $d$-degenerate $k$-colouring~$\psi$ of~$G$.
    Since $G$ does not have an equitable $d$-degenerate $k$-colouring, we deduce that $\psi$ is near-equitable.

Let $C_i^{\varphi}= V$ be the colour class containing~$v$,    so $C_i^{\psi} = (C_i^{\varphi} \setminus \{v\}) \cup \{x\}$.
By~\ref{d}, we deduce that  $\mathcal{A}^{\varphi} \setminus \{C_i^{\varphi} \} \subseteq \mathcal{A}^{\psi} \setminus \{C_i^{\psi} \}$.
Since  $C_i^{\varphi} \in  \mathcal{A}^{\varphi}$, there exists a vertex $z \in C_i^{\varphi}$  that is movable to some $U \in \mathcal{A}^{\varphi} \setminus \{C_i^{\varphi}\} = \mathcal{A}^{\psi} \setminus \{C_i^{\psi}\}$.
Note that $z\neq x$, so $z \in C_i^\psi$ and $C_i^{\psi} \in  \mathcal{A}^{\psi}$.
Since $d_{G}(y,C_i^{\psi}) =  d_G(y,C_i^{\varphi}) -1 =d$, $y$ is movable to~$C_i^{\psi}$.
However $y \in B^{\varphi}$ and $C_i^{\psi} \in \mathcal{A}^{\psi}$, so we have $\mathcal{A}^{\varphi} \subsetneq \mathcal{A}^{\psi}$, contradicting the minimality of~$\varphi$.

	Proof of~\ref{h}:  	 Suppose to the contrary that $t  \ge   \tau b$.
	Then  $k =a+b \ge t+b \ge  (1+\tau)b$ and so
	\begin{align*}
	b \le \frac{k}{1+\tau}\quad \text{ and }\quad	a  = k-b  \ge  \frac{\tau}{1+\tau} k.
	\end{align*}
Hence
\begin{align}
\tau b + (d+1) a 	&=  \tau k + (d+1-\tau) a
	\ge  k\left(\tau + (d+1-\tau)\frac{\tau}{1+\tau}\right) \nonumber \\
	&= \frac{(d+2)\tau}{1+ \tau} k
	 \ge  \frac{(d+2)\tau}{1+ \tau}  \frac{1+\tau}{\tau(d+2)} (\Delta+1)
	= \Delta+1.      \label{eqn:taub}
\end{align}

For any $x \in B$ and $v\in V\in \mathcal{T}$, let
	\begin{align*}
		\mathcal{T}(x) =\{V\in \mathcal{T}\colon d_G(x, V)=d+1\}\quad \text{ and }\quad
		B(v) = \{x\in  N_G(v,B)\colon V\in  \mathcal{T}(x)\}.
	\end{align*}
We now define a weight function~$\mu : B \times T  \rightarrow \mathbb{R}$ such that  for any $x\in B$  and $v\in T$,
\begin{align*}
	\mu(x,v)=  \begin{cases}
			\frac{b}{|B(v)|} & \text{if $x \in B(v)$,}\\
			0 & \text{otherwise.}
	\end{cases}
\end{align*}
Note that
\begin{align}
\sum_{x \in B, v\in T} \mu(x,v) = \sum_{v \in T} \sum_{x \in B(v)} \mu(x,v)
= \sum_{v \in T \colon B(v) \ne \emptyset} b
\le \sum_{V \in \mathcal{T}} b |V|
\overset{\mathclap{\text{\ref{b}}}}\le t( |B|-1). \label{eqn:muupper}
\end{align}
Consider $x \in B$.	Note that
	\begin{align}
		|\mathcal{T}(x)|
		& \overset{\mathclap{\text{\eqref{eqn:B-vertex-degree-in-A}}}}{\ge}t-(d_{G}(x,A)-(d+1)a)=   t+(d+1)a  - d_{G}(x,A)\nonumber\\
	&	\ge t+(d+1)a  + d_G(x,B) - \Delta
  \overset{\mathclap{\text{\eqref{eqn:taub}}}}{\geq} t-\tau b+d_{G}(x,B)+1 >0, \label{eq:Tx}
	\end{align}
		where the last inequality holds as $ t \ge \tau b$.
If $v \in V \subseteq T$ with $x \in B(v)$, then $d_G(x,V) = d+1$.
Hence $G[B(v)]$ is a clique by~\ref{g} and  so
	\begin{align}
		|B(v)| \leq d_{G}(x,B)+1.
		\label{eqn:B(v)}
	\end{align}
	Also, \eqref{eqn:B-vertex-degree-in-A} implies that
	\begin{align}
		 d_{G}(x,B)+1  = d_{G}(x) - d_{G}(x,A)+1 \le \Delta+1 - a(d+1) \le b(d+1).
		\label{eqn:d(x,B)+1}
	\end{align}
Therefore,
\begin{align*}
	\sum_{v \in T }\mu(x,v) & =
    \sum_{v \in T \colon x \in B(v)}\mu(x,v) = \sum_{ V \in \mathcal{T}(x)} \sum_{v\in N_G(x,V)} \frac{b}{|B(v)|}
		\overset{\mathclap{\text{\eqref{eqn:B(v)}}}}{\geq}  |\mathcal{T}(x)| (d+1) \frac{b}{d_{G}(x,B)+1} \\
		&\overset{\mathclap{\text{\eqref{eq:Tx}}}}{\geq}\quad (t-\tau b+d_{G}(x,B)+1)  \frac{b(d+1)}{d_{G}(x,B)+1}
	 = \frac{b(d+1)(t-\tau b)}{d_{G}(x,B)+1}+b(d+1) \\
	&\overset{\mathclap{\text{\eqref{eqn:d(x,B)+1}}}}{\geq}\quad\frac{b(d+1)(t-\tau b)}{b(d+1)}+b(d+1)
	=  t+b(d+1 - \tau) \ge t,
\end{align*}
where the last inequality holds as $\tau \le d+1$.
This implies that $\sum_{x \in B, v\in T} \mu(x,v) \ge t|B|$, contradicting~\eqref{eqn:muupper}.
	\end{proof}

\section{Equitable $d$-degenerate colourings}\label{sec:proof2}

We now prove Theorem~\ref{thm:degeneratecolouring} using Lemmas~\ref{lem:induction-setting-up} and~\ref{lemma:common-properties}.

\begin{proof}[Proof of Theorem~\ref{thm:degeneratecolouring}]
We apply induction on $e(G)$. The statement  holds trivially if $e(G)=0$.
		Thus we assume $e(G) \ge 1$.
		Let $u_0v_0\in E(G)$.
		By the induction hypothesis, $G-u_0v_0$ has an equitable $d$-degenerate $k$-colouring. We may assume that $G$ has no
		equitable $d$-degenerate $k$-colouring.
		Applying Lemma~\ref{lem:induction-setting-up},
		$G$ has a near-equitable $d$-degenerate $k$-colouring~$\varphi$.
		We may further assume that $\varphi$ is minimal.

Let $W \in \mathcal{T}$ be as given by Lemma~\ref{lemma:common-properties}\ref{e}.
Together with Lemma~\ref{lemma:common-properties}\ref{c}, we have
\begin{align*}
	\sum_{w \in W^*} d_G(w,A) & \ge \sum_{w \in W^*} \left( d_G(w,A \setminus (T \cup U_-)) +  d_G(w, T) \right)\\
	& \ge \left((d+1)(a-1-t) + t d\right) |W^*|  = ((a-1)d + a-1-t )|W^*| \ge (a-1)d|W^*|
\end{align*}
as $ t < a$.
On the other hand, recall that $\Delta \le d(k-1) =d(a+b-1)$.
We have
\begin{align}\label{eqn:e(B,S)}
	e_G(B,W^*) & = \sum_{w \in W^*} \left( d_G(w)- d_G(w,A) \right) \le \Delta |W^*| - 	\sum_{w\in W^*} d_G(w,A) \nonumber \\
		& \le
		d(a+b-1)|W^*|- 	\sum_{w\in W^*} d_G(w,A)
		\le db |W^*|
		\overset{\mathclap{\text{Lemma~\ref{lemma:common-properties}\ref{b}}}}{<} d|B|.
\end{align}

Now we give a lower bound on $e_G(B,W^*)$.
Let $B_1=\{x\in B\colon d_G(x,W^*) =d+1\}$, so $|B_1| =e_G(B_1,W^*)/(d+1)\le e_G(B,W^*)/(d+1)$.
Then by~\eqref{eqn:B-vertex-degree-in-A},
\begin{align*}
e_G(B,W^*)\ge (d+2)|B|-|B_1|
\ge (d+2)|B|-\frac{1}{d+1}e_G(B,W^*).
\end{align*}
Thus, $e_G(B,W^*)\geq (d+1)|B|$, contradicting~\eqref{eqn:e(B,S)}.
\end{proof}

\section{Equitable tree colourings}\label{sec:proof1}

Recall that an equitable tree colouring is also an equitable 1-degenerate colouring. We now sketch the proof of Theorem~\ref{thm:treecolouring}.
Suppose that $G$ satisfies all the hypotheses of Lemma~\ref{lemma:common-properties}.
Let $\varphi$ be a minimal near-equitable $1$-degenerate $k$-colouring of~$G$ and
$W \in \mathcal{T}$ be as given by Lemma~\ref{lemma:common-properties}\ref{e}.
An underlying goal will be to show that $t \ge 2b$, which will then contradict Lemma~\ref{lemma:common-properties}\ref{h}.
This will be achieved by bounding $e_G(B,W^*)$ from both above and below.

Most of our new ideas are needed to bound $e_G(B,W^*)$ from below as follows.
We do so by counting the number of edges from~$B$ to~$W^*$.
By~\eqref{eqn:B-vertex-degree-in-A}, every vertex in~$B$ has at least two neighbours in~$W^*$.
Let $B_1$ and $B_2$ be the sets of vertices in~$B$ that have exactly two and three neighbours in~$W^*$, respectively.
Hence we have
\begin{align*}
	e_G(B,W^*) \ge 2 |B_1| + 3 |B_2| + 4 |B \setminus (B_1 \cup B_2)| = 4|B| - 2|B_1| - |B_2|.
\end{align*}
Thus we would need to upper bound $|B_1|$ and~$|B_2|$.

Consider $x \in B_1$ and $w \in N_G(x,W^*)$.
By Lemma~\ref{lemma:common-properties}\ref{f}, $w$ is non-movable to any other colour class in~$\mathcal{A}$.
(Otherwise, we can move~$w$ to $C_-$ along some directed path in~$\mathcal{D}$ and then we can move~$x$ into~$W$.)
Let $W_1$ be the set of vertices in~$W^*$ that are non-movable to any other colour class in~$\mathcal{A}$.
Hence, we have $N_G(B_1, W^*) \subseteq W_1$ and so $2|B_1| = e_G(B_1,W_1) \le e_G(B,W_1)$.

Next consider $x \in B_2$, so $d_G(x,W^*)=3$.
If $x$ has two neighbours in~$W_1$, then we can use~$e_G(B,W_1)$ to bound the number of such vertices.
Hence we assume that $x$ has two neighbours $w_1, w_2$ in~$W^* \setminus W_1$.
First, we move~$w_1$ to some colour class $V_1\in \mathcal{T}\cup\{U_-\}$ and some vertex in $V_1$ to $C_-$ along some directed path in~$\mathcal{D}$.
If we are able to move $x$ to~$W \setminus \{w_1\}$, then we get a contradiction.
Hence assume that $G[W \cup \{x\} \setminus \{w_1\}]$ is not $1$-degenerate, that is, not acyclic.
All cycles in  $G[W \cup \{x\} \setminus \{w_1\}]$ must contain both $x$ and $w_2$.
To remove~$w_2$ from~$W$, we find a colour class $V \in \mathcal{T}$ such that $w_2$ is movable to~$V$ and there is a vertex $v \in V$ that is movable to~$W$.
We then swap $w_2$ and~$v$ by moving $w_2$ to~$V$ and $v$ to~$W$.
We can now move $x$ into~$W$ (providing $N_G(v) \cap ( \{ x \} \cup N_G(x)) = \emptyset$), see Claim~\ref{clm:d=1:B} for details.
To identify suitable colour classes for this swapping operation, we seek colour classes~$V$ such that
$e_G(W^*,V) < 2(|V|-\Delta^2)$.
The set of such colour classes will be denoted by~$\mathcal{R}$.

\begin{proof}[Proof of Theorem~\ref{thm:treecolouring}]
Let $G$ be an $n$-vertex graph with $\Delta(G) \le \Delta$.
If $k \ge \Delta +1$, then by the Hajnal--Szemer\'{e}di theorem~\cite{HS1970},
$G$ contains a  proper equitable $k$-colouring.
Hence we may assume that $\Delta \ge k$.
	
We prove the theorem with a weaker assumption that $\lfloor n/k \rfloor \ge 3\Delta^3$.
(Since $\Delta \ge k$ and $n \ge 3 \Delta^4$, we have $\lfloor n/k \rfloor \ge 3\Delta^3$.)
We apply induction on $e(G)$. The statement  holds trivially if $e(G)=0$.
		Thus we assume $e(G) \ge 1$.
		Let $u_0v_0\in E(G)$.
		By the induction hypothesis, $G-u_0v_0$ has an equitable $1$-degenerate $k$-colouring.
		We may assume that $G$ has no	equitable $1$-degenerate $k$-colouring.
		Applying Lemma~\ref{lem:induction-setting-up}, $G$ has a near-equitable $1$-degenerate $k$-colouring~$\varphi$.
		We may further assume that $\varphi$ is minimal.
	
Note that we have
	\begin{align}
     \Delta+2  &\le  2k = 2a+2b.
           \label{eqn:d=1:Delta1}
        \end{align}
By~\eqref{eqn:B-vertex-degree-in-A}, we have for all $x \in B$,
	\begin{align*}
		d_G(x, B) &= d_G(x) - d_G(x, A) \le \Delta - 2a  \overset{\mathclap{\text{\eqref{eqn:d=1:Delta1}}}}{\le} 2b-2.
	\end{align*}
    Hence $b \ge (\Delta(G[B])+2)/2$ and $e(G[B]) < e(G)$.
		Consider $V \in \mathcal{T}$.
		By Lemma~\ref{lemma:common-properties}\ref{b}, we have
    \begin{align*}
    \left\lfloor \frac{|B|-1}b  \right\rfloor \ge |V| \ge \left\lfloor \frac{n}{k} \right\rfloor \ge 3\Delta^3 \geq 3 (\Delta(G[B]))^3.
    \end{align*}
	Hence by our induction hypothesis, $G[B ]-x$ has  an equitable $1$-degenerate $b$-colouring.
Thus $\varphi$ satisfies all the properties listed in Lemma~\ref{lemma:common-properties}.

Let $W \in \mathcal{T}$ be as given by Lemma~\ref{lemma:common-properties}\ref{e}.
Next, we define $\mathcal{R}$ to be the set of colour classes in~$\mathcal{T}$ that send few edges to~$W^*$ as follows.
Let
\begin{align*}
 \mathcal{R}&=\{V\in \mathcal{T}  \setminus \{W\} \colon  e_G(V, W^*) < 2(|V| -  \Delta^2) \}, \qquad
 R =\bigcup_{V\in \mathcal{R}} V, \qquad r =|\mathcal{R}|, \\
	\mathcal{R}' & = \mathcal{R} \cup \{U_-\}, \qquad R' =\bigcup_{V\in \mathcal{R}'} V = R \cup U_-.
\end{align*}
We now further partition~$W$ as follows.
Let
\begin{align*}
		W_1 &=\{w\in W^* \colon \text{$d_G(w,V^*) \ge 2 $ for any $V \in \mathcal{A} \setminus\{W\}$}\}, \\
  W_2 &=\left\{w \in W^* \setminus W_1 \colon d_G(w, V^*)\le 1 \text{ for exactly one set $V\in \mathcal{R}'$} \right\}, \\
		W_{>2} &= W^* \setminus  (W_1 \cup W_2).
\end{align*}
Intuitively, we view $W_1$ to be the set of vertices in $W^*$ that is not movable to any other colour classes, $W_2$ to be those vertices that are movable to exactly one colour class in~$\mathcal{R}'$.
Next we bound the number of edges between~$B$ and~$W^*$ from above.

\begin{claim}\label{claim:d=1upper}
The following inequalities hold.
\begin{enumerate}[label={\rm(\alph*)},start=1]
	\item $ e_G(B,W_1)  \le 	(2b-1)|W_1|$. \label{itm:d=1:e(B,W1)}
	\item $e_G(B,W_{2})   \le   (2b+t-r) |W_{2}|$. \label{itm:d=1:e(B,W2)}
	 \item $e_G(B,W^*)    \le (2b+1) |W|  +  \frac23|W_2|+(r+1)|W_{>2}|$. \label{itm:d=1:e(B,W3)}
\end{enumerate}
\end{claim}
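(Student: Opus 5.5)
The three bounds all come from one computation. For $w\in W^*$ we have $e_G(w,B)=d_G(w)-d_G(w,A)$, and $\Delta\le 2k-2=2a+2b-2$ by~\eqref{eqn:d=1:Delta1}. So in each case the work is a lower bound on $d_G(w,A)$. We get it class by class from the available facts:
\begin{itemize}
\item for $V\in\mathcal A\setminus(\mathcal T\cup\{U_-\})$, Lemma~\ref{lemma:common-properties}\ref{c} (non-movability, so $d_G(w,V^*)\ge 2$) gives a contribution of at least $2$;
\item for $V\in\mathcal T\setminus\{W\}$, Lemma~\ref{lemma:common-properties}\ref{e} gives $d_G(w,V)\ge 1$;
\item for $V=W$, \eqref{eqn:V^*-vertex-degree} gives $d_G(w,W^*)\ge 1$;
\item for the classes of $\mathcal R'$, the definitions of $W_1,W_2,W_{>2}$ give the rest.
\end{itemize}
Since $U_-\notin\mathcal T$, these families of classes are disjoint.

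For~\ref{itm:d=1:e(B,W1)}, take $w\in W_1$. Every $V\in\mathcal A\setminus\{W\}$ contributes at least $2$ and $W$ contributes at least $1$. Hence $d_G(w,A)\ge 2(a-1)+1$, so $e_G(w,B)\le 2b-1$, and we sum over $W_1$.

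For~\ref{itm:d=1:e(B,W2)}, take $w\in W_2$. The $a-1-t$ classes outside $\mathcal T\cup\{U_-\}$ give $2(a-1-t)$. The $t-1-r$ classes of $\mathcal T\setminus(\mathcal R\cup\{W\})$ give at least $1$ each. Of the $r+1$ classes in $\mathcal R'$, all but one satisfy $d_G(w,V^*)\ge2$, giving $2r$. Finally $W$ gives $1$. The total is $d_G(w,A)\ge 2a-2-t+r$, hence $e_G(w,B)\le 2b+t-r$.

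For~\ref{itm:d=1:e(B,W3)}, a pointwise bound is too weak, because classes in $\mathcal T\setminus\mathcal R$ only give $1$ per vertex. Instead I would count globally, $\sum_{w\in W^*}d_G(w,A)=\sum_{V\in\mathcal A}e_G(W^*,V)$, and use the defining property of $\mathcal T\setminus(\mathcal R\cup\{W\})$, namely $e_G(V,W^*)\ge 2(|V|-\Delta^2)\ge 2(|W|-1-\Delta^2)$; the last step uses Lemma~\ref{lemma:common-properties}\ref{a}, since class sizes in $\mathcal A$ differ by at most one. The remaining contributions are:
\begin{itemize}
\item classes outside $\mathcal T\cup\{U_-\}$: at least $2(a-1-t)|W^*|$;
\item $W$ itself: at least $|W^*|$;
\item the classes of $\mathcal R'$, counted per vertex: at least $2(r+1)$ for $w\in W_1$, at least $2r$ for $w\in W_2$, and at least $r$ for $w\in W_{>2}$ (using Lemma~\ref{lemma:common-properties}\ref{e} on $\mathcal R$).
\end{itemize}
Subtracting from $(2a+2b-2)|W^*|$ and using $|W^*|\le|W|$ gives, up to an error term, roughly
\[
(2b+1)|W|-2|W_1|+(r+1)|W_{>2}|+\cdots .
\]
The error term is of order $2(t-r-1)(\Delta^2+1)$.

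The main obstacle is this bookkeeping: absorbing the error $O(t\Delta^2)$ and the discrepancy between $|W|$ and $|W^*|$ into the slack $2|W_1|$ and the allowance $\tfrac23|W_2|$. I would first try to bound $t<2b\le\Delta$ via Lemma~\ref{lemma:common-properties}\ref{h}, so the error is at most about $2\Delta^3$. I would then combine the global bound with the pointwise bounds~\ref{itm:d=1:e(B,W1)} and~\ref{itm:d=1:e(B,W2)} in a convex combination, weighting by $\tfrac13$ and $\tfrac23$; that weighting is the source of the $\tfrac23|W_2|$ term. This is needed because the pointwise bound on $W_2$ carries $t-r$ rather than a constant. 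The size hypothesis $|W|\ge\lfloor n/k\rfloor\ge3\Delta^3$ is what should make the $\Delta^2$-errors negligible. If that is not enough, the fallback is a case split on whether $|W^*|$ is large or small compared with $|W|$.
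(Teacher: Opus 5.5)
Your (a) and (b) are the paper's argument, and for (c) your global count is also the paper's. Your class-by-class contributions yield
\[
e_G(B,W^*)\le (2b-1)|W^*|+2|W_2|+(r+2)|W_{>2}|+2\Delta^3,
\]
which is the same as $(2b+1)|W^*|-2|W_1|+r|W_{>2}|+2\Delta^3$. Note the coefficient on $|W_{>2}|$ is $r$, not $r+1$. The gap is the final step, which you leave open, and the route you sketch for it fails.

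\begin{itemize}
\item You plan to absorb the error $2\Delta^3$ into $2|W_1|+\frac23|W_2|$. These are parts of $W^*$, and nothing gives $|W^*|\ge 3\Delta^3$; only $|W|\ge\lfloor n/k\rfloor\ge 3\Delta^3$ is available.
\item You treat the difference between $|W|$ and $|W^*|$ as a cost to absorb. It is in fact the slack you need, since the left side of (c) lives on $W^*$ while the right side is in terms of $|W|$. Replacing $(2b+1)|W^*|$ by $(2b+1)|W|$ and discarding the difference throws exactly that slack away.
\item The convex combination with (a) and (b) does not help. It brings back the coefficient $t-r$ on $|W_2|$, which is not bounded by a constant, and it is not where the $\frac23$ comes from.
\end{itemize}

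The missing step goes as follows. Since $2b-1>0$, use $(2b-1)|W^*|\le(2b-1)|W|$. The target $(2b+1)|W|$ then leaves a slack of $2|W|$, which you split as $\frac43|W|+\frac23|W|$.
\begin{itemize}
\item Because $|W_2|+|W_{>2}|\le|W|$, we have $\frac43|W|\ge\frac43|W_2|+|W_{>2}|$. This converts $2|W_2|+(r+2)|W_{>2}|$ into $\frac23|W_2|+(r+1)|W_{>2}|$, and that is the origin of the $\frac23$.
\item The size hypothesis gives $\frac23|W|\ge 2\Delta^3$.
\end{itemize}
In your own form, keep the term $(2b+1)(|W|-|W^*|)$ and observe that
\[
2|W_1|+\tfrac23|W_2|+|W_{>2}|+(2b+1)(|W|-|W^*|)\ge\tfrac23|W|\ge 2\Delta^3 .
\]

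Finally, your bound $t<2b\le\Delta$ is not justified: nothing forces $2b\le\Delta$, since $b$ can be as large as $k-2$. You do not need it. Because $k\le\Delta$ (the case $k\ge\Delta+1$ is Hajnal--Szemer\'edi), $t-r-1\le k-1\le\Delta-1$, and so $2(t-r-1)(\Delta^2+1)\le 2\Delta^3$.
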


\begin{proofclaim}
For any vertex subset $S \subseteq A$, we have
\begin{align}
	e_G(B,S) & = \sum_{w\in S} \left( d_G(w)- d_G(w,A) \right) \le \Delta |S| - 	\sum_{w\in S} d_G(w,A)
		\overset{\mathclap{\text{\eqref{eqn:d=1:Delta1}}}}{\le}
		2(a+b-1)|S|- 	\sum_{w\in S} d_G(w,A).
		\label{eqn:d=1:u1}
\end{align}
To prove the claim, we bound $\sum_{w \in S} d_G(w,A)$ from below for $S \in \{W_1,W_2, W^*\}$ as follows.

Recall that $\delta(G[W^*]) \ge 1$ by~\eqref{eqn:V^*-vertex-degree} and $W_1 \subseteq W^*$.
By~\eqref{eqn:V^*-vertex-degree} and the definition of~$W_1$, we have
\begin{align*}
	\sum_{w \in W_1} d_G(w,A) & \ge \sum_{w \in W_1} \left( 2a - 1 \right)=\left( 2a - 1 \right) |W_1|.
\end{align*}
Together with~\eqref{eqn:d=1:u1} taking~$S=W_1$, \ref{itm:d=1:e(B,W1)} holds.

Consider $w \in W_2$.
By Lemma~\ref{lemma:common-properties}\ref{c}, we have $d_G(w,A \setminus (T \cup U_-)) \ge 2(a-t-1)$. By Lemma~\ref{lemma:common-properties}\ref{e}, we have $d_G(w,V) \ge 1$ for all $V \in \mathcal{T}$. Combine with~\eqref{eqn:V^*-vertex-degree} and the definition of~$W_2$, we have
\begin{align*}
	\sum_{w \in W_2} d_G(w,A) & \ge \sum_{w \in W_2} \left( d_G(w,A \setminus (T \cup U_-)) + d_G(w,R') +d_G(w, T \setminus R ) \right) \\
	& \ge \sum_{w \in W_2} \left( 2(a-t-1)  + 2r 	+  t-r\right)  = ( 2a -t + r -2 ) |W_2|.
\end{align*}
Together with~\eqref{eqn:d=1:u1} taking~$S=W_2$, \ref{itm:d=1:e(B,W2)} holds.

We now prove \ref{itm:d=1:e(B,W3)}.
Let $T' = T \setminus (R \cup W)$.
We consider $\sum_{w \in W^*}d_G(w,A \setminus T')$ and $\sum_{w \in W^*}d_G(w,T')$ separately.
By similar calculations as above, we have
\begin{align*}
	\sum_{w \in W_1} d_G(w,A\setminus T') & \ge  \left( 2(a-t+r) + 1 \right) |W_1|,\\
		\sum_{w \in W_2} d_G(w,A \setminus T') & \ge (2(a - t+r) -1 ) |W_2|,\\
		\sum_{w \in W_{>2}} d_G(w,A \setminus T') & \ge (2(a - t)+r -1 ) |W_{>2}|.
\end{align*}
Furthermore,
\begin{align*}
	\sum_{w \in W^*} d_G(w,T') & = e_G(W^*,T')
= \sum_{V \in \mathcal{T} \setminus (\mathcal{R} \cup \{W\}) } e_G(W^*,V) \ge \sum_{V \in \mathcal{T} \setminus (\mathcal{R} \cup \{W\}) } 2(|V| -  \Delta^2) \\
 & \overset{\mathclap{\text{Lemma~\ref{lemma:common-properties}\ref{a}}}}{\ge}
     (t-r-1) \cdot 2(|W^*|-1 - \Delta^2  )
	 \ge 2(t-r-1)|W^*| - 2 \Delta^3,
\end{align*}
where $2( t-r-1 )( \Delta^2 +1 ) \le 2 ( k -1) ( \Delta^2 +1 )  \le 2 (\Delta - 1) ( \Delta^2 +1 ) \le 2 \Delta^3 $.
In summary, we have
\begin{align*}
\sum_{w \in W^*} d_G(w,A) \ge (2a-1) |W^*| - 2 |W_2| -  (r+2)|W_{>2}|-2 \Delta^3.
\end{align*}
By~\eqref{eqn:d=1:u1} taking~$S=W^*$, we have
\begin{align*}
		e_G(B,W^*)  & \le (2b-1) |W^*|  +  2|W_2| + (r+2)|W_{>2}| +2 \Delta^3\\
		& \le (2b-1) |W|  +  2|W_2| + (r+2)|W_{>2}| + 2 \Delta^3\\
		& = (2b+1) |W| + \left( 2|W_2| + (r+2)|W_{>2}|- \frac{4}{3}|W| \right) + \left(2\Delta^3 - \frac23 |W| \right)\\
		& \le  (2b+1) |W| + \frac23 |W_2| + (r+1)|W_{>2}|,
\end{align*}
where the last inequality holds as $|W| \ge \lfloor n/k \rfloor \ge 3\Delta^3$.
Thus, \ref{itm:d=1:e(B,W3)} holds.
\end{proofclaim}

By~\eqref{eqn:B-vertex-degree-in-A}, every vertex in~$B$ has at least two neighbours in~$W^*$.
We now identify those who have at most $3$ neighbours in~$W^*$.
\begin{align*}
	B_1 & =\{x\in B\colon d_G(x,W^*) =2\},  &
	B_2 & =\{x\in B \colon d_G(x,W^*) =3\},\\
	B_{2,1} & =\{x\in B_2 \colon d_G(x,W_1) \ge 2\}, &
	B_{2,2} & = B_2 \setminus B_{2,1}.
\end{align*}

In the next claim, we bound the number of vertices in $B_1$ and~$B_2$.
\begin{claim} \label{clm:d=1:B}
The following inequalities hold.
\begin{enumerate}[label={\rm(\alph*)},start=1]
	\item $2|B_1| \le e_G(B,W_1)$. \label{itm:d=1:B1}
	\item $2|B_1| + |B_2| \le e_G(B,W_1) +\frac13 e_G(B, W_2)$.\label{itm:d=1:B2}
\end{enumerate}
\end{claim}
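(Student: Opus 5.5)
For part \ref{itm:d=1:B1}, I would show that $N_G(B_1,W^*)\subseteq W_1$. Take $x\in B_1$ and $w\in N_G(x,W^*)$. Since $d=1$ and $d_G(x,W^*)=2=d+1$, Lemma~\ref{lemma:common-properties}\ref{f} applies with $V=W$. So $w$ is non-movable to every other colour class in $\mathcal{A}$. By Fact~\ref{lemma:d-degenerate}\ref{d-degenerate-a}, non-movability of $w$ to $V$ forces $d_G(w,V^*)\ge 2$, which is exactly the condition defining $W_1$. Hence every $x\in B_1$ sends exactly two edges into $W_1$, and $2|B_1|=e_G(B_1,W_1)$.

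For part \ref{itm:d=1:B2}, I would split $B_2=B_{2,1}\cup B_{2,2}$. Since $B_1$, $B_{2,1}$ and $B_{2,2}$ are disjoint, the plan is to prove
\[
e_G(B_{2,1},W_1)\ge 2|B_{2,1}|\ge |B_{2,1}| \quad\text{and}\quad e_G(B_{2,2},W_1)+\tfrac13 e_G(B_{2,2},W_2)\ge |B_{2,2}|.
\]
Adding these to part \ref{itm:d=1:B1} gives the claim. The first inequality is immediate from the definition of $B_{2,1}$. For the second, consider $x\in B_{2,2}$. If $x$ has a neighbour in $W_1$ it already contributes at least $1$. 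Otherwise all three neighbours of $x$ in $W^*$ lie in $W^*\setminus W_1$, and it suffices to show that none of them lies in $W_{>2}$. Then $x$ sends three edges into $W_2$ and contributes $3/3=1$.

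So the heart of the proof is to show the following: if $x\in B_2$ has two neighbours $w_1,w_2\in W^*\setminus W_1$ with (say) $w_2\in W_{>2}$, then $G$ has an equitable tree $k$-colouring, a contradiction. I would follow the sketch in the paper.
\begin{enumerate}
\item By Lemma~\ref{lemma:common-properties}\ref{c}, $w_1$ is movable to some $V_1\in\mathcal{T}\cup\{U_-\}$. Move $w_1$ to $V_1$, and then move a vertex of $V_1$ to $C_-$ along a path avoiding $W$, using Lemma~\ref{lemma:common-properties}\ref{d} or the definition of $\mathcal{T}$ and $U_-$.
\item If $x$ can now be added to $W\setminus\{w_1\}$, we finish as in the proof of Lemma~\ref{lemma:common-properties}\ref{f}, colouring $G[B]-x$ by induction.
\item Otherwise every cycle of $G[(W\setminus\{w_1\})\cup\{x\}]$ passes through $x$ and $w_2$. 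Since $w_2\in W_{>2}$, it is movable to a class $V\in\mathcal{R}$ other than $V_1$ (it is movable to at least two classes of $\mathcal{R}'$, or to a suitable class of $\mathcal{T}$).
\item The condition $e_G(V,W^*)<2(|V|-\Delta^2)$ gives more than $\Delta^2$ vertices $v\in V$ with $d_G(v,W^*)\le1$. Each such $v$ is movable to $W$, and we can choose one outside $N_G(x)\cup N_G(N_G(x))$.
\item Swap $w_2$ and $v$. Then $W\setminus\{w_1,w_2\}\cup\{v,x\}$ is acyclic, because $v$ has at most one neighbour there and no cycle through $x$ survives. This yields an equitable tree $k$-colouring after recolouring $G[B]-x$ equitably by induction.
\end{enumerate}

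I expect the main obstacle to be the bookkeeping in this swap. One has to check that $V\ne V_1$ can always be chosen, and that the vertices moved along the path to $C_-$ do not destroy the degeneracy of $V$ or $W$. One also has to handle the case where $w_2$ is movable only to classes of $\mathcal{T}\setminus\mathcal{R}'$. I would first try to rule that case out directly from the definitions of $W_2$ and $W_{>2}$ together with Lemma~\ref{lemma:common-properties}\ref{c}. If that fails, I would fall back on a weaker per-vertex contribution and absorb the loss into the slack in Claim~\ref{claim:d=1upper}.
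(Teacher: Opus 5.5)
Your part (a), your reduction of (b) to showing that no $x\in B_{2,2}$ has neighbours $w_1,w_2\in W^*\setminus W_1$ with $w_2\in W_{>2}$, and your choice of a vertex of an $\mathcal{R}$-class with at most one neighbour in $W^*$ and at distance more than $2$ from $x$ all agree with the paper. The gap is exactly the ``bookkeeping'' you defer, and your step~3 is false as stated.

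\textbf{The gap.} Being in $W_{>2}$ gives two classes $U_1,U_2\in\mathcal{R}'$ to which $w_2$ is movable. These may be $U_-$ and $V_1$, so there need not be any $V\in\mathcal{R}\setminus\{V_1\}$ to which $w_2$ is movable. Even if such a $V$ exists, it may lie on the path $\mathcal{P}$ you use to push a vertex from $V_1$ to $C_-$. Then $V$ receives both the representative vertex $y$ from its predecessor on $\mathcal{P}$ and $w_2$. Each of $y,w_2$ is movable to $V$ on its own, but together they can close a cycle (e.g.\ $y\sim w_2$ and both have a neighbour in the same tree of $G[V]$). So the single-class swap ``$w_2$ into $V$, $v$ out of $V$'' need not produce a tree colouring.

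\textbf{The missing idea.} Decouple the class that donates a vertex to $W$ from the class that receives $w_2$, and cut $\mathcal{P}$.

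If $U_1,U_2$ both lie on $\mathcal{P}$ with $U_1$ first, then $U_1\ne U_-$. Indeed, a class after $U_-$ on $\mathcal{P}$ reaches $C_-$ in $\mathcal{D}_- - U_-$, so it is not in $\mathcal{T}\supseteq\mathcal{R}$. Hence $U_1\in\mathcal{R}$. Now move $w_1$ to $V_1$ and push a vertex only along $V_1\mathcal{P}U_1$. Send the chosen $u_1\in U_1$ to $W$. Move $w_2$ to $U_2$ and push a vertex along $U_2\mathcal{P}C_-$. The two segments are disjoint, so every class gains at most one vertex; this also covers the case $U_1=V_1$.

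If some $U_i$ is off $\mathcal{P}$, then $U_i\neq U_-$ and $U_i\ne V_1$, since both $U_-$ and $V_1$ lie on $\mathcal{P}$. So $U_i\in\mathcal{R}$ is untouched by the path, and your swap works with $V=U_i$.

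\textbf{Two smaller points.}
\begin{enumerate}
\item The case where $w_2$ is movable to no class of $\mathcal{R}'$ cannot be excluded from the literal definitions. It disappears if $W_2$ is read with ``at most one'' in place of ``exactly one''. This leaves Claim~\ref{claim:d=1upper} intact, since such a vertex has at least $2(r+1)$ neighbours in $R'$.
\item In step~5, $v$ may have many neighbours among isolated vertices of $G[W]$, so ``at most one neighbour there'' is not literally true. The correct justification is $d_G(v,(W_0\setminus\{v\})^*)\le 1$ for the new class $W_0=(W\setminus\{w_1,w_2\})\cup\{x,v\}$. This holds because the newly non-isolated vertices lie in $\{x\}\cup N_G(x)$, and $v$ is at distance more than $2$ from $x$.
\end{enumerate}
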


\begin{proofclaim}
Observe that the following two inequalities imply the claim.
\begin{align}
		2|B_1| + 2 |B_{2,1}| &\le e_G(B,W_1), \label{eqn:d=1:B_1}\\
	2|B_1| + 2 |B_{2,1}| + 3|B_{2,2}| & \le e_G(B,W_1)+e_G(B,W_2). \label{eqn:d=1:B_2}
\end{align}
By Lemma~\ref{lemma:common-properties}\ref{f},  $N_G(B_1, W^*) \subseteq W_1$.
Together with the definition of~$B_{2,1}$, we have
\begin{align*}
	2|B_1| + 2 |B_{2,1}| \le \sum_{y \in B_1 \cup B_{2,1} } d_{G}(y,W_1) \le e_G(B,W_1)
\end{align*}
implying~\eqref{eqn:d=1:B_1}.

Suppose that $N_G(B_{2,2}, W^*) \subseteq W_1 \cup W_2$.
Thus,
\begin{align*}
	2|B_1| + 2 |B_{2,1}| + 3|B_{2,2}| & \le \sum_{y \in B_1 \cup B_{2,1} } d_{G}(y,W_1) +  \sum_{y \in B_{2,2} } d_{G}(y,W_1 \cup W_2)
	 \le e_G(B,W_1)+e_G(B,W_2)
\end{align*}
implying~\eqref{eqn:d=1:B_2}.

Therefore, we may assume that there exists $x \in B_{2,2}$ with $N_G(x,W_{>2}) \ne \emptyset$.
By our induction hypothesis, $G[B]-x$ has an equitable $1$-degenerate $b$-colouring~$\psi_B$.
Recall that $G$ does not have an equitable $1$-degenerate $k$-colouring.
Thus to obtain a contradiction (and to prove the claim), it suffices to show that $G[A \cup \{x\}]$ contains a $1$-degenerate $a$-colouring~$\psi_A$ in which the size of colour class~$C_-$ increases by one and the size of each remaining colour class in~$\mathcal{A}$ remains the same.

Since $x \notin B_{2,1}$, there exist distinct vertices $w_1 \in N_G(x,W_2 \cup W_{>2})$ and $w_2 \in N_G(x, W_{>2})$.
Since $w_1 \notin W_1$, there exists $V_1 \in \mathcal{T} \cup \{U_-\}$ such that $d(w_1, V^*_1) \le 1$, so $w_1$ is movable to~$V_1$.
By Lemma~\ref{lemma:common-properties}\ref{d}, there exists a directed $(V_1, C_-)$-path~$\mathcal{P}$ in~$\mathcal{D}-W$.
Note that $U_- \in V(\mathcal{P})$.
Similarly, since $w_2 \in W_{>2}$, there exist $U_1,U_2 \in \mathcal{R}'$ such that $w_2$ is movable to both $U_1$ and~$U_2$.

\noindent\textbf{Case 1: $U_1, U_2 \in V(\mathcal{P})$.}
Let $U_1$ be closer to~$V_1$ along~$\mathcal{P}$, so $V(V_1\mathcal{P}U_1) \cap V(U_2 \mathcal{P}C_-) = \emptyset$ and $U_1 \in \mathcal{R}$.
Since  $U_1 \in \mathcal{R}$, we have $e_G(U_1, W^*) < 2 (|U_1| -  \Delta^2)$.
There exist $\Delta^2+1$ vertices $u \in U_1$ such that $d_G(u, W^*) \le 1 $.
On the other hand,  there are at most $\Delta^2$ vertices that is joined to~$x$ with a path of length at most~$2$ and~$x \notin U_1$.
Thus, we can pick $u_1 \in U_1$ be such that $d(u_1, W^*) \le 1 $ and there is no path of length at most~$2$ between $u_1$ and~$x$, that is,
\begin{align}
	(\{x\} \cup N_G(x) ) \cap N_G(u_1) = \emptyset. \label{eqn:N(x)N(u1)}
\end{align}

We move $w_1$ to~$V_1$, a vertex of~$V_1$ to~$U_1$ along~$V_1\mathcal{P}U_1$, $w_2$ to~$U_2$, a vertex of~$U_2$ to~$C_-$ along~$U_2 \mathcal{P}C_-$ and both $x$ and $u_1$ to~$W$. Denote by~$\psi_A$ the resulting $a$-colouring of~$G[A \cup \{x\}]$.
The size of each colour class of~$\psi_A$ is as desired.

It remains to show that $\psi_A$ is indeed a $1$-degenerate colouring of~$G[A \cup \{x\}]$.
Each colour class~$V'$ in~$\psi_A$ not corresponding to~$W$ is either the same as before or is obtained by adding a movable vertex and deleting at most one vertex, so $G[V']$ is $1$-degenerate.
The colour class in~$\psi_A$ corresponding to~$W$ is $W_0 = W \cup \{x,u_1\} \setminus \{w_1,w_2\}$.
By Fact~\ref{lemma:d-degenerate}\ref{d-degenerate-a}, $G[W_0 \setminus \{u_1\}]$ is $1$-degenerate.
Observe that $(W_0 \setminus \{u_1\})^* \setminus W^*$ is a subset of isolated vertices in $G[W]$ and so $(W_0 \setminus \{u_1\})^* \setminus W^* \subseteq N_G(x)$.
By~\eqref{eqn:N(x)N(u1)}, we have $d_G(u_1, (W_0 \setminus \{u_1\})^*) \le d_G(u_1, W^*) \le 1$.
By Fact~\ref{lemma:d-degenerate}\ref{d-degenerate-a}, $G[W_0 ]$ is $1$-degenerate as required.

\noindent\textbf{Case 2: $U_i \notin V(\mathcal{P})$ for some $i \in [2]$.}
Since $U_- \in V(\mathcal{P})$, $U_i \in \mathcal{R}$ and so $e_G(U_i, W^*) < 2 (|U_i| -  \Delta^2)$.
By a similar argument as above there exists $u \in U_i$ such that $d_G(u, W^*) \le 1 $ and $(\{x\} \cup	N_G(x)) \cap N_G(u) = \emptyset$.
We move $w_1$ to~$V_1$, a vertex~$V_1$ to~$C_-$ along~$\mathcal{P}$, $w_2$ to~$U_i$ and both $x$ and $u$ to~$W$.
Again, by a similar argument, this results in a desired $1$-degenerate $a$-colouring of~$G[A \cup \{x\}]$.
\end{proofclaim}

We now divide into two cases depending on whether $b =1$ or not.

\noindent \textbf{Case A: $b=1$}.
By Lemma~\ref{lemma:common-properties}\ref{h} with $\tau = 2$, we deduce that $ t =1$.
This implies that $\mathcal{T} = \{W\}$, $\mathcal{R} = \emptyset$ and $W_{>2} = \emptyset$.
Note that
\begin{align*}
	e_G(B,W_1)+e_G(B,W_2) & = e_G(B,W^*) \ge 3|B| - |B_1|
	\overset{\mathclap{\text{Claim~\ref{clm:d=1:B}\ref{itm:d=1:B1}}}}{\ge}
	 3|B| -  e_G(B,W_1)/2,\\
	3|B| &\le 3e_G(B,W_1)/2+e_G(B,W_2)
		\overset{\mathclap{\text{Claim~\ref{claim:d=1upper}\ref{itm:d=1:e(B,W1)} and~\ref{itm:d=1:e(B,W2)}}}}{\le} 3|W_1|/2 + 3 |W_2| \le 3 |W|
		 \overset{\mathclap{\text{Lemma~\ref{lemma:common-properties}\ref{b}}}}{<} 3 |B|,
\end{align*}
a contradiction.

\noindent \textbf{Case B: $b\ge 2$}.
First we bound $e_G(B,W^*)$ from below.
We have
\begin{align*}
	e_G(B,W^*) &\ge 4 |B| - (2|B_1| + |B_2|) \overset{\mathclap{\text{Claim~\ref{clm:d=1:B}\ref{itm:d=1:B2}}}}{\ge}
	4 |B| -e_G(B,W_1) -\frac13 e_G(B, W_2) \\
	& \overset{\mathclap{\text{Claim~\ref{claim:d=1upper}\ref{itm:d=1:e(B,W1)} and~\ref{itm:d=1:e(B,W2)}}}}{\ge} 4 |B| - (2b-1)|W_1| - \frac{1}{3}(2b+t-r)|W_2|\\
	& = 4 |B| - (2b-1)(|W^*| - |W_{>2}|) - \frac{1}{3}(t-r+3-4b)|W_2|\\
	& \ge 4 |B| - (2b-1)(|W| - |W_{>2}|) - \frac{1}{3}(t-r+3-4b)|W_2|.
\end{align*}
Together with Claim~\ref{claim:d=1upper}\ref{itm:d=1:e(B,W3)}, we have
\begin{align*}
	\frac{1}{3}(t-r+5-4b)|W_2|+(r+2-2b)|W_{>2}| \ge 4(|B|- b|W|)	\overset{\mathclap{\text{Lemma~\ref{lemma:common-properties}\ref{b}}}}{>}0.
\end{align*}
By considering the coefficients of $|W_2|$ and $|W_{>2}|$, we must have
\begin{align*}
	\max\left\{ t-r+5-4b, r+2-2b \right\} > 0.
\end{align*}
By Lemma~\ref{lemma:common-properties}\ref{h} with $\tau = 2$, we deduce that $ t \le 2b-1$.
However, $t-r+5-4b \le 4-2b \le 0$ as $b \ge 2$ and $r+2 -2b \le  t-1+2-2b\le 0 $, a contradiction.
This completes the proof of the theorem.
\end{proof}

Here we highlight an obstacle when generalising this proof for equitable $d$-degenerate colourings when $d \ge 2$.
This is due to the dramatic change in $W^*$ after adding a movable vertex to it, which is crucial in the proof of Claim~\ref{clm:d=1:B}\ref{itm:d=1:B2}.
We illustrate this with the following example.
Suppose that $d=2$ and $G$ is a path.
Hence $(V(G))^* = \emptyset$.
Let $G'$ be the new graph obtained from~$G$ by adding a new vertex~$x$ and joining $x$ to both endvertices of~$G$.
Observe that $G'$ is a cycle and so $(V(G'))^* = V(G')$.
(On the other hand, when $d =1$, adding a new vertex to~$x$ increases~$|(V(G))^*|$ by at most the degree of~$x$.)
One could mimic the proof of Theorem~\ref{thm:treecolouring} by setting $\mathcal{R}$ to be the set of colour classes~$V \in \mathcal{T}$ such that $e_G(V,W)$ is small (instead of $e_G(V,W^*)$).
We believe that this would yield an improved bound of saying $k \ge \Delta/(d + 1/2) +10$ for large graphs.
However new idea would be needed to prove that Conjecture~\ref{conj:EDC} holds up to an additive constant.

	\bibliographystyle{abbrv}
	\bibliography{eq}

\end{document}